\documentclass[12pt]{amsart}
\usepackage[T1]{fontenc}
\usepackage{lmodern,amsthm,amsmath,amssymb,mathrsfs,mathtools,booktabs,longtable,array}
\mathtoolsset{showonlyrefs=true}
\usepackage[a4paper,left=2cm,right=2cm,top=2cm,bottom=2cm]{geometry}

\usepackage{cite}
\usepackage[colorlinks=true,linkcolor=blue,citecolor=blue,urlcolor=blue]{hyperref}

\hypersetup{pdftitle={Two-Sided Dimension Bounds for the Peterson Hit Problem via Projections and Matrix Minors},pdfauthor={Dang Vo Phuc and Peter V. Danchev}}
\allowdisplaybreaks[2]
\makeatletter
\@ifundefined{subjclassname@2020}{\@namedef{subjclassname@2020}{\textup{2020} Mathematics Subject Classification}}{}
\makeatother
\newcommand{\F}{\mathbb F_2}
\newcommand{\Pk}{\mathcal P_k}

\newcommand{\A}{\mathcal A}
\newcommand{\Aplus}{\mathcal A^+}
\newcommand{\Qk}{Q\mathcal P_k}
\newcommand{\Qkd}{(Q\mathcal P_k)_d}
\newcommand{\Sq}{\operatorname{Sq}}
\newcommand{\LM}{\operatorname{LM}}
\newcommand{\Supp}{\operatorname{Supp}}
\newcommand{\rank}{\operatorname{rank}}
\newcommand{\GL}{\operatorname{GL}}
\newcommand{\DP}{\operatorname{DP}}
\newcommand{\im}{\operatorname{im}}
\newcommand{\Span}{\operatorname{span}_{\F}}
\newcommand{\Z}{\mathbb Z}
\newcommand{\NN}{\mathbb N}
\newcommand{\lamstd}{\lambda_{\mathrm{std}}}

\newcommand{\eps}{\varepsilon}
\newtheorem{theorem}{Theorem}[section]
\newtheorem{corollary}[theorem]{Corollary}
\newtheorem{lemma}[theorem]{Lemma}
\newtheorem{proposition}[theorem]{Proposition}
\theoremstyle{definition}
\newtheorem{definition}[theorem]{Definition}
\theoremstyle{remark}

\newtheorem{example}[theorem]{Example}
\def\DD{D\kern-.7em\raise0.4ex\hbox{\char '55}\kern.33em}
\newcommand{\DVPhuc}{\DD. V. Ph\'uc}

\title[Two-sided bounds for cohit dimensions]{Two-Sided Dimension Bounds\\ for the Peterson Hit Problem\\ via Projections and Matrix Minors}

\author[\DD. V. Ph\'uc]{\DD\d{\u a}ng V\~o Ph\'uc$^{1}$}
\address{$^{1}$Department of Mathematics, FPT University, An Phu Thinh New Urban Area, Quy Nhon, Vietnam}
\address{$^{1}$Department of Mathematics and Statistics, Quy Nhon University, 170 An Duong Vuong, Quy Nhon, Vietnam}
\email{dangphuc150488@gmail.com, dangvophuc@qnu.edu.vn}
\thanks{ORCID (\DD. V. Ph\'uc): \url{https://orcid.org/0000-0002-6885-3996}}

\author[P. V. Danchev]{Peter V. Danchev$^{2}$}
\address{$^{2}$Institute of Mathematics and Informatics, Bulgarian Academy of Sciences, Bulgaria}
\email{danchev@math.bas.bg; pvdanchev@yahoo.com}
\thanks{ORCID (P.V. Danchev): \url{https://orcid.org/0000-0002-2016-2336}}

\subjclass[2020]{Primary 55S10; Secondary 05C70, 05E45, 15A03, 68W30}
\keywords{Peterson hit problem, Steenrod algebra, matrix rank, uniquely restricted matching, simplicial boundary, exact computation}
\begin{document}

\begin{abstract}
Let $\mathcal P_k=\mathbb F_2[x_1,\ldots,x_k]$, with $\deg x_i=1$,
be the polynomial algebra equipped with the standard unstable action
of the mod-$2$ Steenrod algebra $\A$. The Peterson hit problem asks
for a minimal homogeneous set of $\A$-generators for $\mathcal P_k$,
or equivalently for a homogeneous basis of the cohit space
$Q\mathcal P_k=\mathcal P_k/\Aplus\mathcal P_k = \mathbb F_2\otimes_{\A}P_k$, where $\Aplus$
denotes the augmentation ideal of $\A$. The dimension of
$(Q\mathcal P_k)_d$ therefore gives the number of generators required
in positive degree $d$. Although solved in every degree $d$ for $k\leq 4,$ the problem remains open in general. As the number of
variables $k$ and degrees $d$ increase, exact calculations involve
increasingly large systems of Steenrod relations. Motivated by this
difficulty, we establish explicit upper and lower bounds for
$\dim_{\mathbb F_2}(Q\mathcal P_k)_d$ for arbitrary positive integers
$k$ and $d$.

Our method combines binary combinatorics and linear algebra with
graph-theoretic and simplicial structures in the matrix of the
generating Steenrod squares. We characterize its zero rows, count
its zero columns, and refine rank estimates using evaluated Adem
relations. Triangular minors are described by uniquely restricted
matchings, while selected column families form augmented simplicial
boundary matrices. An explicit projection modulo $\im\Sq^1$
separates the known first-square rank from the rank of a smaller
matrix. We classify the zero rows of this projected matrix and
derive a closed formula for their number. Pullbacks of the
corresponding coordinate functionals form a linearly independent
family in the classical dual-spike module, containing all coordinate
spikes and, in some degrees, additional non-coordinate functionals.
Selected minors yield upper cohit bounds, whereas these functionals,
evaluated Adem relations, and spike-insertion constructions yield
lower bounds. The resulting inequalities do not require a complete
cohit basis or the full hit rank. 
\end{abstract}

\maketitle

\section{Introduction}\label{sec:intro}

The mod-$2$ cohomology of an elementary abelian $2$-group provides a
natural setting for the study of generators over the Steenrod algebra.
For $k\geq1$ and $V_k=(\mathbb Z/2)^k$, this cohomology is
\[
 \Pk=H^*(BV_k;\F)\cong\F[x_1,\ldots,x_k],\qquad \deg x_i=1.
\]
The total Steenrod square satisfies $\Sq(x_i)=x_i+x_i^2$, and the
Cartan formula determines its action on every polynomial. The Peterson
hit problem asks for a minimal homogeneous generating set of $\Pk$ as
an unstable module over the mod-$2$ Steenrod algebra $\A$. Equivalently,
it asks for a basis in each degree of the cohit space
\[
 \Qk=\F\otimes_{\A}\Pk\cong\Pk/\Aplus\Pk,
\]
where $\Aplus$ is the augmentation ideal and $\Aplus\Pk$ is the
subspace of hit polynomials. Thus $\dim_{\F}\Qkd$ is the number of
degree-$d$ generators in a minimal homogeneous generating set.
The two monographs of Walker and Wood \cite{WalkerWoodI,WalkerWoodII}
develop this problem through the polynomial action, binary weights,
and representations of finite general linear groups.

The numerical problem is part of a broader representation-theoretic
question. Linear substitutions commute with Steenrod operations, so
each cohit space is a $\GL(k,\F)$-module. Singer's algebraic transfer
relates the coinvariants of the dual Steenrod kernel to the cohomology
of $\A$ \cite{Singer1989}; the Dickson-algebra hit problem gives a
related question for invariant polynomials \cite{HungNam2001}.
The same polynomial action occurs in Kuhn's generic representation
theory \cite{Kuhn1994} and in the study of unstable modules and
algebras modulo nilpotent objects
\cite{HennLannesSchwartz1993,Powell2019}. Estimating the dimension of
the full cohit space is therefore a first step towards understanding
the representations in which these constructions take place.

The calculations of Peterson, Kameko, and Sum determine the cohit
spaces in every degree for $k\leq4$; see
\cite{Sum2015,WalkerWoodI,WalkerWoodII}. For five or more variables,
complete answers are available in specified degrees and generic
families, but a determination in all degrees remains open
\cite{Sum2023,Sum2024}. The distinction between small and larger
numbers of variables is already visible in the problem of finding
sharp dimension bounds. For fixed $k$, the dimensions $\dim\Qkd$ are
bounded independently of $d$ \cite[Theorem~7.1.1]{WalkerWoodI}.
Kameko conjectured the sharper bound
\[
 \dim_{\F}\Qkd\leq\prod_{i=1}^{k}(2^i-1)
 \qquad\text{for all }k\geq1\text{ and }d\geq0,
\]
whose right-hand side is the number of complete flags in $\F^k$.
Although this bound holds for $k\leq4$, Sum \cite{Sum2010} constructed
counterexamples for every $k\geq5$; a
partial-flag interpretation is given in
\cite[Sections~24.5--24.6]{WalkerWoodII}.
The failure of this proposed bound does not contradict uniform
boundedness. It shows, however, that the dimensions in higher rank
cannot be controlled by the complete-flag count alone, and makes the
construction of effective bounds for individual degrees a distinct
part of the hit problem.

Several structural reductions precede such a calculation. If
$\alpha(n)$ denotes the number of ones in the binary expansion of
$n$, put
\[
 \mu(n)=\min\{r\in\mathbb Z_{\geq0}:\alpha(n+r)\leq r\},\qquad n\geq0.
\]
Wood's vanishing theorem gives $\Qkd=0$ when $\mu(d)>k$.
When $\mu(d)=k$, Kameko's isomorphism replaces degree $d$ by
$(d-k)/2$. Iteration reduces the remaining nonzero calculations to
degrees with $\mu(d)<k$; see
\cite[Theorem~2.5.5 and Section~6.5]{WalkerWoodI}.
Within these degrees, weight filtrations, minimal spikes, and
restricted hit equations organize the relations among monomials.
Sum's admissible-basis and Kameko-kernel constructions
\cite{Sum2013,Sum2015,Sum2023,Sum2024}, the spike and dimension
estimates of Mothebe \cite{Mothebe2008Spikes,Mothebe2013Dimension},
and the monomial constructions and low-degree formulas in
\cite{Mothebe2013Admissible,MothebeUys2015,Mothebe2016Products,MothebeKMR2016}
provide explicit information without treating every degree in the
same way.

Despite the aforementioned structural reductions, calculating the exact cohit dimension inherently requires determining the full rank of the degree-$d$ hit matrix $M(k,d)$. The labelled columns of $M(k,d)$ are the monomial expansions of $\Sq^{2^t}(g)$ with $\deg g=d-2^t$. Writing $N_k(d)=\binom{d+k-1}{k-1}$, one has
\[
 \dim_{\F}\Qkd=N_k(d)-\rank M(k,d).
\]
The ambient dimension $N_k(d)$ is polynomial in $d$ for fixed $k$, but the massive number of columns and the complex intermediate relations make a complete rank calculation computationally expensive. However, establishing a dimension inequality requires strictly less information: a nonsingular minor provides an upper cohit bound, whereas relations among the columns or dual functionals annihilating them provide a lower cohit bound. Motivated by this gap, the objective of the present paper is to establish explicit two-sided dimension bounds directly from the hit matrix, circumventing the need to first determine a complete admissible basis.

\textbf{Main results.} The main results of this paper may be summarized as follows.
\begin{itemize}
\item[$\bullet$]
The zero rows of $M(k,d)$ are exactly the spike monomials, and the
zero columns of each $\Sq^{2^t}$ layer admit an exact count from the
exponent residues modulo $2^{t+1}$. These counts yield upper hit-rank
bounds, refined by the exact rank of $\Sq^1$ and by evaluated Adem
relations, with intersections of relation spaces counted only once.

\item[$\bullet$]
Triangular minors are described by acyclic choices of pivots in the
column supports. Their optimal size is the maximum uniquely restricted
matching number of the bipartite support graph. Certain congruence
families of columns are augmented simplicial boundary matrices, with
explicit component ranks. We determine the containments among the
resulting singleton, incidence-forest, and triangular bounds.

\item[$\bullet$]
For $d>0$, an explicit projection modulo $\im\Sq^1$ gives a matrix
$\overline M(k,d)$ satisfying
\[
 \rank M(k,d)=\rho_k(d-1)+\rank\overline M(k,d),
\]
where $\rho_k(n)$ is the rank of $\Sq^1:\mathcal P_k^n\to\mathcal P_k^{n+1}$.
Minors of this smaller matrix strengthen the upper cohit bounds.
Its zero rows are classified explicitly, giving a closed counting
formula and a linearly independent family in the dual-spike module.
This family contains all coordinate spikes and certain non-coordinate
linear combinations of their images under linear substitutions.

\item[$\bullet$]
Combining the minors, relation spaces, and dual functionals gives
two-sided dimension bounds for all $k\geq1$ and $d\geq0$.
In conjunction with classical spike-insertion constructions, the
finite evaluations give
\[
 550\leq\dim(Q\mathcal P_5)_{29}\leq1665,\qquad
 1146\leq\dim(Q\mathcal P_6)_{40}\leq54347.
\]
Neither inequality requires the exact cohit dimension in the degree
being estimated.
\end{itemize}

\medskip
\noindent\textbf{Methodology and Techniques.}
To achieve this objective, we develop a matrix-minor approach to cohit dimension inequalities by combining linear algebra over $\F$ with binary combinatorics, graph theory, and simplicial homology. Rather than relying on full Gaussian elimination, we extract structural information directly from the Cartan--Lucas expansions. The explicit coordinates of the quotient by $\im\Sq^1$ allow both minors and dual functionals to be constructed before any elimination of the higher columns.

The Cartan formula and Lucas' criterion determine the column supports.
Distinct leading rows give triangular minors, while the absence of
alternating cycles characterizes the corresponding uniquely restricted
matchings. Golumbic, Hirst, and Lewenstein introduced these matchings
specifically to bound matrix rank from a zero/nonzero pattern
\cite{GolumbicHirstLewenstein2001}. Selected congruence families of
columns are augmented simplicial boundary matrices, which provide
explicit independent columns and relations.

The essential step is to apply these support constructions after
removing the image of the first square in specified coordinates.
Using the exactness of $(\Pk,\Sq^1)$ in positive degree
\cite[Proposition~1.3.5]{WalkerWoodI}, we specify a contracting
homotopy $h$ and form $p_d=h\Sq^1$. This map projects onto the span
of monomials whose first positive exponent is odd. For $t\geq1$,
the Adem identity $\Sq^1\Sq^{2^t}=\Sq^{2^t+1}$ gives
$p_d\Sq^{2^t}(g)=h\Sq^{2^t+1}(g)$.
The higher projected columns can therefore be formed without first
reducing the $\Sq^1$ columns. Their minors give lower hit-rank bounds;
pulling the zero-row coordinates back along $p_d$ gives explicit
elements of the dual-spike module and hence upper hit-rank bounds.
The classification of those rows makes their number computable from
spike counts alone, without enumerating the projected matrix.

Unlike computations seeking a complete admissible basis, the method
requires only selected minors, relation spaces, and annihilating
functionals. Its basic estimates apply for every $k\geq1$ and $d>0$
without prior knowledge of a weight quotient or Kameko kernel, while
allowing the additional information provided by
\cite{Sum2015,Sum2023,WalkerWoodI,WalkerWoodII} to strengthen the bounds.

The paper is organized as follows. Section~\ref{sec:matrix} fixes the
hit matrix and the support and order conventions.
Section~\ref{sec:upper} derives upper rank bounds from zero rows,
zero columns, and relations among the squares.
Section~\ref{sec:lower} constructs and compares the minor bounds.
Section~\ref{sec:projection} gives the explicit quotient by the first
square, classifies its zero rows, and identifies the associated dual
functionals.
Finally, Section~\ref{sec:main} combines these results into two-sided cohit
inequalities, and Section~\ref{sec:examples} evaluates them and
compares them with known dimensions.

\section{The hit matrix and its support structures}\label{sec:matrix}
We translate the hit space into a labelled column map and fix the
conventions needed to compare minors. The labels retain the source and
square degree even when distinct columns have equal coordinate vectors.

Throughout $k\geq1$, $d\geq0$, and $\NN$ includes zero. Put
$\mathcal P_k^n=0$ for $n<0$ and $N_k(n)=\dim \mathcal P_k^n$, so
$N_k(n)=\binom{n+k-1}{k-1}$ for $n\geq0$ and is zero otherwise.
A binomial coefficient with nonnegative upper entry is zero if the
lower entry is negative or exceeds the upper entry. An empty sum and
the maximum of an empty family of nonnegative bounds are zero.
For $e\in\NN^k$, write $x^e=\prod_i x_i^{e_i}$ and $|e|=\sum_i e_i$.
For $J\subseteq\{1,\ldots,k\}$, $\mathbf1_J\in\NN^k$ denotes
its indicator vector. Thus $\eps_i=\mathbf1_{\{i\}}$.
The symbol $\Supp(e)$ denotes $\{i:e_i>0\}$, while the support of a
polynomial or column means its set of nonzero monomial coordinates.

For $a,b\geq0$, write $b\preceq a$ when every binary digit of $b$
is at most the corresponding digit of $a$. The Cartan and Lucas formulas are
\begin{equation}\label{eq:cartan}
 \Sq^r(x^a)=\sum_{\substack{b\in\NN^k\!,\ |b|=r\!,\ b_i\preceq a_i}}
                  x^{a+b}.
\end{equation}
Distinct tuples $b$ give distinct monomials, so they cannot cancel
within this single monomial image. Instability gives
$\Sq^r(\mathcal P_k^n)=0$ for $r>n$; see
\cite[Propositions~1.1.8--1.1.10]{WalkerWoodI}.
Products of operations act from the left:
$(\theta\eta)(f)=\theta(\eta(f))$.
Since the squares $\Sq^{2^t}$ generate $\A$ as an algebra,
\begin{equation}\label{eq:outer-generators}
 \Aplus=\sum_{t\geq0}\Sq^{2^t}\A.
\end{equation}
This is a sum of right ideals; its use here is to identify the
outermost operation in each word.

\begin{definition}\label{def:matrix}
Let $\mathcal R_d(k)=\{x^e:|e|=d\}$ and
$V(k,d)=\bigoplus_{2^t\leq d}\mathcal P_k^{d-2^t}$.
The hit matrix $M(k,d)$ is the matrix of
\begin{equation}\label{eq:Phi}
 \Phi_{k,d}:V(k,d)\longrightarrow \mathcal P_k^d,
 \qquad (g_t)_t\longmapsto\sum_t\Sq^{2^t}(g_t),
\end{equation}
in the monomial bases. Each column retains its label $(t,g)$.
\end{definition}

\begin{proposition}\label{thm:matrix-rank}
For $k\geq1$ and $d\geq0$, $\im\Phi_{k,d}=(\Aplus\Pk)_d$ and
\begin{equation}\label{eq:dimension-rank}
 \dim\Qkd=N_k(d)-\rank M(k,d).
\end{equation}
\end{proposition}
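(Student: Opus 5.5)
The plan is to prove the two inclusions $\im\Phi_{k,d}\subseteq(\Aplus\Pk)_d$ and $(\Aplus\Pk)_d\subseteq\im\Phi_{k,d}$ separately, and then read off \eqref{eq:dimension-rank} by linear algebra.

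\emph{First inclusion.} Each column of $M(k,d)$ is $\Sq^{2^t}(g)$ with $t\geq0$ and $\deg g=d-2^t$. Since $\Sq^{2^t}$ has positive degree, it lies in $\Aplus$, so every column is hit. Because $\Aplus\Pk$ is a graded subspace, finite sums of columns of degree $d$ lie in $(\Aplus\Pk)_d$.

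\emph{Second inclusion.} I would use \eqref{eq:outer-generators}. Since $\Aplus=\sum_{t}\Sq^{2^t}\A$, every element of $\Aplus\Pk$ is a finite sum of terms $\Sq^{2^t}(\theta f)$ with $\theta\in\A$ and $f\in\Pk$. Put $g=\theta f\in\Pk$; then each term is $\Sq^{2^t}(g)$.
\begin{itemize}
\item Decompose $g$ into homogeneous components. Since $\Sq^{2^t}$ raises degree by exactly $2^t$, only the component $g_t$ of degree $d-2^t$ contributes to the degree-$d$ part of a homogeneous element of degree $d$.
\item If $2^t>d$, the term contributes nothing in degree $d$. This holds even without invoking instability, because $d-2^t<0$ and $\mathcal P_k^{d-2^t}=0$ by convention.
\item Collecting terms with the same $t$ gives an element $(g_t)_{2^t\leq d}\in V(k,d)$ that maps to the given hit polynomial.
\end{itemize}

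\emph{Counting.} Given $\im\Phi_{k,d}=(\Aplus\Pk)_d$, we have $\Qkd=\mathcal P_k^d/(\Aplus\Pk)_d$. Hence
\[
\dim\Qkd=N_k(d)-\dim\im\Phi_{k,d}=N_k(d)-\rank M(k,d),
\]
since $M(k,d)$ is the matrix of $\Phi_{k,d}$ in the monomial bases. The case $d=0$ is consistent: $V(k,0)=0$, so the rank is $0$ and $\dim(Q\mathcal P_k)_0=1=N_k(0)$.

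\emph{Main obstacle.} The only nontrivial input is \eqref{eq:outer-generators}, which rests on the fact that the $\Sq^{2^t}$ generate $\A$. The point to check is that every positive-degree monomial in the generators can be written with an outermost $\Sq^{2^t}$, so it lies in $\Sq^{2^t}\A$, and that $\Aplus$ is spanned by such monomials. I would argue this directly: $\Aplus$ is spanned by nonempty words in the generators, and any such word $\Sq^{2^{t_1}}\cdots\Sq^{2^{t_m}}$ equals $\Sq^{2^{t_1}}\cdot(\text{element of }\A)$. No Adem relations are needed for this step.
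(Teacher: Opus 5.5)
Your proposal is correct and follows essentially the same route as the paper. Both arguments use that the squares $\Sq^{2^t}$ generate $\A$, so every homogeneous hit is a sum of words with an outermost $\Sq^{2^t}$ applied to a polynomial, whose degree-$(d-2^t)$ component gives a preimage in $V(k,d)$; the dimension formula then follows by rank--nullity in the quotient, and your explicit treatment of non-homogeneous $\theta f$ and of layers with $2^t>d$ only spells out what the paper compresses into ``by homogeneity.''
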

\begin{proof}
Every column is a positive Steenrod image. Conversely, write a
homogeneous hit as a sum of homogeneous operations applied to
polynomials. Expand each operation as a sum of words in the algebra
generators. A word with outermost generator $\Sq^{2^t}$ contributes
$\Sq^{2^t}(g)$ with $\deg g=d-2^t$, by homogeneity.
Expanding $g$ in monomials places the hit in $\im\Phi_{k,d}$.
The monomials of degree $d$ are indexed by weak compositions of $d$
into $k$ parts, giving $N_k(d)$ rows. Rank-nullity in the quotient
proves \eqref{eq:dimension-rank}. Inactive layers with $2^{t+1}>d$
are zero by instability, but their labels are retained in $M$.
\end{proof}

\begin{definition}\label{def:support-structures}
The support graph $G(k,d)$ is the bipartite graph whose vertex classes
are the rows and labelled columns of $M(k,d)$, with an edge for each
entry equal to one. The support hypergraph has the same row vertices
and one labelled hyperedge $\Supp(c)$ for every nonzero column $c$.
\end{definition}

If $\nu$ and $\nu_{\mathrm{ind}}$ are the maximum matching and induced
matching numbers, respectively, then
\begin{equation}\label{eq:matching-rank}
 \nu_{\mathrm{ind}}(G(k,d))\leq\rank M(k,d)\leq\nu(G(k,d)).
\end{equation}
Indeed, an induced matching gives a permutation submatrix. A nonzero
term in the determinant of any nonsingular minor gives a matching.
Over $\F$, the support pattern determines the entries themselves.
Restricting to selected support certificates is therefore a restriction
on the calculation, not a loss of information inherent in the support.

\subsection{Orders and leading rows}
Write $a=\sum_{j\geq0}\alpha_j(a)2^j$, where
$\alpha_j(a)\in\{0,1\}$ is the $j$th binary digit; position zero
is the units digit. This differs from
$\alpha(a)=\sum_j\alpha_j(a)$ used in the introduction.
For $e\in\NN^k$, its weight sequence is
$\omega_j(e)=\sum_i\alpha_{j-1}(e_i)$, $j\geq1$.
Trailing zeros are understood. The exponent orders use
$x_1>\cdots>x_k$. In equal total degree, exponent lexicographic
order compares the first differing exponent in the usual direction;
degree-reverse-lexicographic order compares the last differing
exponent in the opposite direction. Both weight orders use exponent
lexicographic order to break ties. The left weight order uses the first
differing entry in the usual direction. The right weight order is
\emph{reversed} right lexicographic order: at the largest index $j$
where $\omega_j\ne\eta_j$, one has
$\omega<_{r}\eta$ precisely when $\omega_j>\eta_j$.
This is the convention of \cite[Definition~3.1.5]{WalkerWoodI}.
For a nonzero column $c$ and total row order $\prec$, let
$\LM_\prec(c)$ be its largest support row, and put
\begin{equation}\label{eq:lambda-order}
 \lambda_\prec(M)=\#\{\LM_\prec(c):c\ne0\},\qquad
 \lamstd(M)=\max_{\prec\in\mathcal O}\lambda_\prec(M),
\end{equation}
where $\mathcal O$ consists of the four orders just specified.
These definitions also apply to matrices whose rows are a subset of
$\mathcal R_d(k)$.

\section{Upper bounds for the hit rank}\label{sec:upper}
We first count coordinate directions which are absent from the matrix.
We then replace the first-layer column count by its exact rank and
incorporate further relations without counting intersections twice.
The dual formulation provides a separate source of upper rank bounds.

\subsection{Spikes and zero rows}
We use the standard spike convention in
\cite[Definition~1.5.2]{WalkerWoodI}.

\begin{definition}\label{def:spike}
A monomial $x^b$ is a spike if $b_i=2^{m_i}-1$ for integers $m_i\geq0$.
Let $\mathcal S_k(d)$ be the number of spikes of degree $d$.
For use with empty sets of variables, put $\mathcal S_0(0)=1$
and $\mathcal S_0(d)=0$ for $d\ne0$. Put $\mathcal S_k(d)=0$
for every $k\geq0$ and $d<0$.
\end{definition}

\begin{example}\label{ex:strict-spikes}
In three variables and degree seven, the spike vectors are the
permutations of $(7,0,0)$ and $(3,3,1)$. Thus $\mathcal S_3(7)=6$.
Zero exponents are allowed because $0=2^0-1$.
\end{example}

The following classical spike obstruction
\cite[Proposition~1.5.3]{WalkerWoodI} will be used both in the
zero-row analysis and in coefficient extraction at a spike exponent.

\begin{lemma}\label{lem:spike-not-term}
Let $g\in\Pk$ be a monomial and $r>0$. No spike occurs with nonzero
coefficient in $\Sq^r(g)$.
\end{lemma}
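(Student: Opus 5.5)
The plan is to use the Cartan--Lucas expansion \eqref{eq:cartan} directly. Write $g=x^a$. Then every monomial occurring in $\Sq^r(x^a)$ has the form $x^{a+b}$ with $|b|=r$ and $b_i\preceq a_i$ for all $i$. Distinct $b$ give distinct monomials, so each such monomial appears with coefficient exactly one. It therefore suffices to show that if $a+b$ is a spike vector, then $b=0$, which is impossible because $|b|=r>0$.

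The key step is a one-variable binary statement: if $0\le b\preceq a$ and $a+b=2^m-1$, then $b=0$. Because $b\preceq a$, the binary digits of $b$ form a subset of those of $a$. Hence $a+b=(a-b)+2b$, where $a-b$ is obtained from $a$ by clearing the digits of $b$, and $a-b$ has no digit in common with $b$.

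I would argue by the lowest set digit of $b$. Suppose $b\neq 0$ and let $j$ be the smallest position with $\alpha_j(b)=1$. Then $\alpha_j(a)=1$ as well. Below position $j$, $b$ contributes nothing, so the digits of $a+b$ in positions $<j$ agree with those of $a$. At position $j$ we add $1+1$ with no incoming carry, which produces digit $0$ and a carry. So $\alpha_j(a+b)=0$.

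On the other hand, $a+b=2^m-1\ge a+1>2^j$, because $a\ge 2^j$ and $b\ge 2^j$. Since $2^m-1>2^j$, we have $m>j$, and therefore $\alpha_j(2^m-1)=1$. This contradicts $\alpha_j(a+b)=0$, so $b=0$.

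Applying this coordinatewise, if $x^{a+b}$ is a spike then every $b_i=0$, so $|b|=0\neq r$. Hence no term of $\Sq^r(x^a)$ is a spike. The expected obstacle is the carry bookkeeping at the lowest digit of $b$, and the argument above handles it without further case analysis. The case $r>|a|$ is vacuous, since the sum in \eqref{eq:cartan} is then empty by instability.
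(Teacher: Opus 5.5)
Your proof is correct. The paper gives no argument of its own: it cites \cite[Proposition~1.5.3]{WalkerWoodI}, whose standard proof is exactly your reduction via \eqref{eq:cartan} to the one-coordinate statement that $b\preceq a$ and $a+b=2^m-1$ force $b=0$ (this is also the form in which the paper later invokes the lemma ``in one variable''). Your carry analysis of that step can be shortened: $a=2^m-1-b$ is the $m$-digit binary complement of $b$, so $a$ and $b$ share no digit, and $b\preceq a$ gives $b=0$ at once.
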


\begin{theorem}\label{thm:exact-zero-rows}
Let $k\geq1$, $d\geq0$, and $x^b\in\mathcal R_d(k)$. Its row in
$M(k,d)$ is zero if and only if $x^b$ is a spike.
\end{theorem}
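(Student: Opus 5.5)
The plan is to prove the two implications separately. The forward direction (spike gives a zero row) follows from Lemma~\ref{lem:spike-not-term}. The converse is an explicit construction: for each non-spike $x^b$ we exhibit one column of $M(k,d)$ whose support contains $x^b$.

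\emph{Spike $\Rightarrow$ zero row.} Every column of $M(k,d)$ is $\Sq^{2^t}(g)$ for a monomial $g$ of degree $d-2^t$, and $2^t>0$. By Lemma~\ref{lem:spike-not-term}, a spike $x^b$ has coefficient zero in each such column. Hence its row vanishes identically.

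\emph{Non-spike $\Rightarrow$ nonzero row.} Suppose $x^b$ is not a spike, and choose an index $i$ with $b_i$ not of the form $2^m-1$. The binary expansion of $b_i$ then has a zero digit below its top digit. Let $s$ be the smallest position with $\alpha_s(b_i)=0$. Since $b_i+1$ is not a power of $2$, the digit at position $s$ is not the leading zero, so $b_i\geq 2^{s+1}+(2^s-1)$. Take the exponent of $x_i$ in $g$ to be $a_i=b_i-2^s$, keep all other exponents unchanged, and set $t=s$. I now check three things.
\begin{itemize}
\item[(i)] $a_i\geq0$ and $\deg g=d-2^s$, so the label $(s,g)$ is a genuine column, with $2^s\leq d$.
\item[(ii)] $2^s\preceq a_i$. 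Because positions $0,\ldots,s-1$ of $b_i$ are all $1$ and position $s$ is $0$, subtracting $2^s$ borrows from the next higher $1$-digit of $b_i$. That higher digit exists, since $b_i$ has a one above position $s$, as noted above. The borrow sets digit $s$ of $a_i$ to $1$, so $2^s\preceq a_i$ holds.
\item[(iii)] In the Cartan expansion \eqref{eq:cartan} of $\Sq^{2^s}(x^a)$, take $b'=2^s\eps_i$. This tuple contributes the monomial $x^{a+2^s\eps_i}=x^b$.
\end{itemize}
Distinct tuples give distinct monomials, so nothing cancels this term, and the coefficient of $x^b$ in that column is $1$. The row of $x^b$ is therefore nonzero.

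The step needing the most care is (ii), the Lucas condition $2^s\preceq b_i-2^s$. An alternative that may be simpler is to use $s=0$ whenever $b_i$ is even and positive, with $a_i=b_i-1$ odd so that $1\preceq a_i$. Then only odd non-spike exponents need the borrow argument above.

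Finally, the degenerate cases must be covered. When $d=0$ the only monomial is $1$, which is a spike, and there are no columns. When $k=1$ the argument applies unchanged.
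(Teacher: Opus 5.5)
Your proposal is correct and follows essentially the same route as the paper. The forward direction comes from Lemma~\ref{lem:spike-not-term}. For the converse, you take $g=x^b/x_i^{2^t}$ with $t$ a zero binary position of $b_i$ below its top digit, and the single Cartan tuple $2^t\eps_i$ returns $x^b$ with coefficient one. The only difference is that you fix $t$ to be the lowest zero digit, so the lower bits are all ones, whereas the paper allows any such zero position. The borrow computation is the same in both cases.
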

\begin{proof}
The forward obstruction is Lemma~\ref{lem:spike-not-term}.
For the converse, suppose that some exponent $b_i$ is not $2^m-1$.
It is positive and has a zero binary digit at a position $t$ below
its highest nonzero digit. Write $b_i=2^{t+1}q+a$ with
$q\geq1$ and $0\leq a<2^t$. Then
\[
 b_i-2^t=2^{t+1}(q-1)+2^t+a,
\]
whose $t$th digit is one. Let $g=x^b/x_i^{2^t}$.
The Cartan tuple which assigns $2^t$ to the $i$th variable and zero
to the others has coefficient one and output $x^b$.
No other tuple has the same output. Thus the row is nonzero.
For $d=0$, the sole monomial is the spike $1$, and $M$ has no columns.
\end{proof}

Spike counts have been studied by Mothebe \cite{Mothebe2008Spikes}.
The following multiplicity formula specifies the count used here.

\begin{proposition}\label{prop:spike-count}
Let $k\geq1$, $d\geq0$, and $L=\lfloor\log_2(d+k)\rfloor$. Then
\begin{equation}\label{eq:spike-count}
 \mathcal S_k(d)=
 \sum_{\substack{c_0,\ldots,c_L\geq0\\
       \sum_m c_m=k,\ \sum_m2^m c_m=d+k}}
       \frac{k!}{\prod_m c_m!},
 \qquad
 \rank M(k,d)\leq N_k(d)-\mathcal S_k(d).
\end{equation}
\end{proposition}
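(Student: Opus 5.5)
The plan has two independent parts: a bijective count of spike exponents, and a rank bound coming from the zero rows already identified in Theorem~\ref{thm:exact-zero-rows}.

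\emph{The counting formula.} A spike of degree $d$ is an exponent vector $b\in\NN^k$ with $b_i=2^{m_i}-1$ and $\sum_i b_i=d$. Equivalently, it is a tuple $(m_1,\ldots,m_k)\in\NN^k$ with $\sum_i 2^{m_i}=d+k$, since each $b_i\mapsto m_i$ is a bijection between numbers of the form $2^m-1$ and $\NN$. Each $m_i$ satisfies $2^{m_i}\leq d+k$, so $m_i\leq L=\lfloor\log_2(d+k)\rfloor$. I group the tuples $(m_i)$ by their multiplicity vector $c_m=\#\{i:m_i=m\}$ for $0\leq m\leq L$. This vector satisfies $\sum_m c_m=k$ and $\sum_m 2^m c_m=d+k$. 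Conversely, every such vector arises, and the number of tuples with a prescribed multiplicity vector is the multinomial coefficient $k!/\prod_m c_m!$. Summing over all admissible multiplicity vectors gives the first identity in \eqref{eq:spike-count}.

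\emph{Edge cases.} For $d=0$ the only solution is $c_0=k$, which gives the single spike $1$. This matches $\mathcal S_k(0)=1$, and $L=\lfloor\log_2 k\rfloor\geq0$ is well defined because $k\geq1$.

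\emph{The rank bound.} By Theorem~\ref{thm:exact-zero-rows}, exactly $\mathcal S_k(d)$ rows of $M(k,d)$ are zero. Deleting zero rows does not change the rank, so the rank is at most the number of remaining rows, which is $N_k(d)-\mathcal S_k(d)$. Equivalently, the image of $\Phi_{k,d}$ lies in the span of the non-spike monomials.

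I expect no real obstacle here. The only point that needs care is making the bijection $b\leftrightarrow(m_i)\leftrightarrow$ (multiplicity vector, arrangement) explicit, and checking that the bound $m\leq L$ loses no solutions.
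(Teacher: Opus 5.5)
Your proposal is correct and matches the paper's proof essentially step for step. Both encode spikes as tuples $(m_1,\ldots,m_k)$ with $\sum_i 2^{m_i}=d+k$, group them by multiplicity vector to get the multinomial count with every $m_i\leq L$, and derive the rank bound by deleting the zero rows of Theorem~\ref{thm:exact-zero-rows}. Only the easy direction of that theorem, Lemma~\ref{lem:spike-not-term}, is actually needed for the inequality.
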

\begin{proof}
A spike is determined by an ordered tuple $(m_1,\ldots,m_k)$ with
$\sum_i2^{m_i}=d+k$. Let $c_m$ count the occurrences of $m$.
The two constraints in the sum are then necessary and sufficient,
and there are $k!/\prod_m c_m!$ orderings with fixed multiplicities.
Every $m_i$ is at most $L$. The rank bound follows by deleting the
zero rows characterized in Theorem~\ref{thm:exact-zero-rows}.
\end{proof}

\subsection{The exact number of zero columns}
\begin{lemma}\label{lem:binary-selection}
Let $t\geq0$ and $c_0,\ldots,c_t\in\NN$. If
$\sum_{j=0}^t2^jc_j\geq2^t$, then there are integers
$0\leq d_j\leq c_j$ with $\sum_{j=0}^t2^jd_j=2^t$.
\end{lemma}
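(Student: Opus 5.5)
We argue by induction on $t$, greedily using the largest available powers first. For $t=0$ the hypothesis reads $c_0\geq1$, so $d_0=1$ works. For the inductive step we split according to whether $c_t$ is positive.

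\textbf{Case $c_t\geq1$.} We take $d_t=1$ and $d_j=0$ for $j<t$; this gives the sum $2^t$ directly.

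\textbf{Case $c_t=0$.} Now the hypothesis reads
\[
 \sum_{j=0}^{t-1}2^jc_j\geq 2^t=2\cdot2^{t-1}.
\]
We aim to extract two disjoint subfamilies, each summing to $2^{t-1}$.
\begin{itemize}
\item First, apply the induction hypothesis to $c_0,\ldots,c_{t-1}$ to obtain $d'_j\leq c_j$ with $\sum_{j<t}2^jd'_j=2^{t-1}$.
\item Next, the remaining multiplicities $c_j-d'_j$ still satisfy
\[
 \sum_{j<t}2^j(c_j-d'_j)\geq 2^t-2^{t-1}=2^{t-1}.
\]
A second application of the induction hypothesis therefore gives $d''_j\leq c_j-d'_j$ with $\sum_{j<t}2^jd''_j=2^{t-1}$.
\item Finally, set $d_j=d'_j+d''_j\leq c_j$ for $j<t$ and $d_t=0$. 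Then $\sum_j 2^jd_j=2^t$.
\end{itemize}

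The only point needing care is that the induction statement is applied with top index $t-1$ to a tuple of length $t$. This matches the statement of the lemma exactly, since the hypothesis involves only the entries up to the top index. No further obstacle arises; the argument is the standard fact that a multiset of powers of two whose sum is at least $2^m$, with all parts at most $2^m$, contains a submultiset summing to exactly $2^m$.
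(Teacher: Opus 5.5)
Your induction is correct. The base case, the split on $c_t$, and the two applications of the hypothesis at level $t-1$ all check out; the second application is to the leftover multiplicities $c_j-d'_j$, whose weighted sum is still at least $2^{t-1}$.

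The paper takes a different route: an extremal argument with no induction. Among all permitted sums $\sum_j2^jd_j$ that are at least $2^t$, it takes the least one, $T$. If $T>2^t$, it lets $j_0$ be the smallest index with $d_{j_0}>0$. Minimality forces $T-2^{j_0}<2^t$, so $0<T-2^t<2^{j_0}$, which contradicts $2^{j_0}\mid T-2^t$.

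The paper's argument is shorter and needs no case distinction. It also applies verbatim to any target divisible by $2^t$, not only to $2^t$ itself. Your argument is explicitly recursive, so it doubles as an algorithm for producing the selection. Its key step, extracting a second disjoint selection from what remains after a first one, is the device the paper uses later in the proof of Proposition~\ref{prop:D} to show that a singleton column forces $A<2R$.
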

\begin{proof}
Among the permitted sums which are at least $2^t$, choose the least,
$T=\sum_j2^jd_j$. If $T>2^t$, let $j_0$ be the least index with
$d_{j_0}>0$. Minimality implies $T-2^{j_0}<2^t$, so
$0<T-2^t<2^{j_0}$. Both $T$ and $2^t$ are divisible by
$2^{j_0}$, which is impossible. Therefore $T=2^t$.
\end{proof}

\begin{lemma}\label{lem:annihilation}
Let $t\geq0$, $R=2^t$, and $e\in\NN^k$. Then
\begin{equation}\label{eq:annihilation}
 \Sq^R(x^e)=0
 \quad\Longleftrightarrow\quad
 \sum_{i=1}^k(e_i\bmod 2R)<R.
\end{equation}
\end{lemma}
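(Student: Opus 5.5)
By the Cartan--Lucas formula \eqref{eq:cartan}, $\Sq^R(x^e)$ is a sum of distinct monomials, one for each tuple $b\in\NN^k$ with $|b|=R$ and $b_i\preceq e_i$ for all $i$. There is no cancellation, so $\Sq^R(x^e)=0$ holds exactly when no such tuple exists. The plan is to show that such a tuple exists if and only if $\sum_i(e_i\bmod 2R)\geq R$.

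The first step is a reduction. Since $R=2^t$, any $b_i\preceq e_i$ occurring in a tuple of total $R$ has $b_i\leq R$. If $b_i=R$, then $b_i$ uses only digit $t$. If $b_i<R$, then $b_i$ uses only digits $0,\ldots,t-1$. In either case $b_i$ only involves digits at positions at most $t$. So $b_i\preceq e_i$ is equivalent to $b_i\preceq r_i$, where $r_i=e_i\bmod 2R$ keeps digits $0,\ldots,t$ of $e_i$. The existence question therefore depends only on the residues $r_i$.

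The necessity direction is then immediate. If a tuple exists, then $b_i\preceq r_i$ gives $b_i\leq r_i$, and hence $R=\sum_i b_i\leq\sum_i r_i$.

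For sufficiency, assume $\sum_i r_i\geq R$. Let $c_j=\sum_i\alpha_j(r_i)$ for $0\leq j\leq t$ be the number of available binary digits at position $j$. Then $\sum_{j=0}^t 2^jc_j=\sum_i r_i\geq R$. Lemma~\ref{lem:binary-selection} supplies integers $0\leq d_j\leq c_j$ with $\sum_j 2^jd_j=2^t$. At each position $j$, choose $d_j$ of the indices $i$ with $\alpha_j(r_i)=1$ and assign the digit $2^j$ to $b_i$. Each $b_i$ is then a sub-sum of distinct binary digits of $r_i$, so $b_i\preceq r_i\preceq e_i$, and $|b|=R$. This tuple contributes a nonzero monomial, so $\Sq^R(x^e)\neq0$.

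The only delicate point is the reduction step. One has to make sure that restricting to digits at positions at most $t$ is legitimate, since a digit of $e_i$ above position $t$ can never be used in a total of $R$. With that in place, the combinatorial content is already contained in Lemma~\ref{lem:binary-selection}.
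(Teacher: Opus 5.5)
Your proposal is correct and follows essentially the same route as the paper: you use $b_i\leq R<2R$ to replace the Lucas condition $b_i\preceq e_i$ by $b_i\preceq e_i\bmod 2R$, obtain necessity from $b_i\leq e_i\bmod 2R$, and obtain sufficiency by applying Lemma~\ref{lem:binary-selection} to the digit counts $c_j$ and assigning the selected digits to their variables, with the Cartan formula ruling out cancellation. Your case split $b_i=R$ versus $b_i<R$ just spells out the paper's one-line justification of the reduction to residues.
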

\begin{proof}
Put $a_i=e_i\bmod 2R$. An allocation in \eqref{eq:cartan} has
$b_i\leq R<2R$, so the condition $b_i\preceq e_i$ is equivalent
to $b_i\preceq a_i$. If $\sum_i a_i<R$, no such allocation can have
sum $R$. Conversely, write $a_i=\sum_{j=0}^t\eps_{ij}2^j$ and
$c_j=\sum_i\eps_{ij}$. If $\sum_i a_i\geq R$, apply
Lemma~\ref{lem:binary-selection} and select $d_j$ of the available
ones in position $j$. Assign each selected digit to the variable from
which it was chosen. The resulting $b_i$ satisfy $b_i\preceq a_i$
and $\sum_i b_i=R$, giving a nonzero term. Distinct allocations do
not cancel, by \eqref{eq:cartan}.
\end{proof}

Let $Z_t(k,S)$ count degree-$S$ monomials annihilated by $\Sq^{2^t}$,
with $Z_t(k,S)=0$ for $S<0$.

\begin{proposition}\label{prop:Z}
Let $S\geq0$, $t\geq0$, and $S=2^{t+1}Q+r$, where
$0\leq r<2^{t+1}$. Then
\begin{equation}\label{eq:Z-formula}
 Z_t(k,S)=
 \begin{cases}
 \displaystyle\binom{Q+k-1}{k-1}\binom{r+k-1}{k-1},&r<2^t,\\[5pt]
 0,&r\geq2^t.
 \end{cases}
\end{equation}
\end{proposition}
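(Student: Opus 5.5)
We plan to count, via Lemma~\ref{lem:annihilation}, the exponent vectors $e\in\NN^k$ with $|e|=S$ and $\sum_i(e_i\bmod 2R)<R$, where $R=2^t$. Each exponent has a unique decomposition $e_i=2Rq_i+a_i$ with $q_i\geq0$ and $0\leq a_i<2R$. This gives a bijection between vectors $e$ and pairs $(q,a)\in\NN^k\times\{0,\ldots,2R-1\}^k$. Under it, $|e|=2R|q|+|a|$ and the annihilation condition reads $|a|<R$.

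The key step is to show that $|a|$ is forced to equal $r$. Since $|a|<R<2R$, the congruence $S=2R|q|+|a|$ shows that $|a|$ is the least nonnegative residue of $S$ modulo $2R$, that is $|a|=r$. Then $|q|=(S-r)/(2R)=Q$.

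This splits into the two cases of \eqref{eq:Z-formula}.

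\emph{Case $r\geq R$.} Here the requirement $|a|=r<R$ is impossible, so no annihilated monomial exists and $Z_t(k,S)=0$.

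\emph{Case $r<R$.} Any $a$ with $|a|=r$ automatically has every $a_i\leq r<R<2R$, so the digit bound $a_i<2R$ is vacuous. Conversely, every pair with $|q|=Q$ and $|a|=r$ gives a vector $e$ of degree $S$ satisfying $\sum_i(e_i\bmod 2R)=r<R$. Hence the annihilated monomials correspond bijectively to pairs of weak compositions of $Q$ and of $r$ into $k$ parts. Stars and bars then gives $\binom{Q+k-1}{k-1}\binom{r+k-1}{k-1}$.

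The only point requiring care is the vacuity of the upper bound $a_i<2R$ in the second case. Without it one would face an inclusion–exclusion count, and it is precisely the inequality $r<R$ that removes this. The degenerate situations are consistent with the formula: for $k=1$ both binomials equal $1$, and for $S=0$ we have $Q=r=0$.
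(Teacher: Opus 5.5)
Your proposal is correct and follows essentially the same route as the paper. Both arguments split $e_i=2^{t+1}q_i+a_i$, use Lemma~\ref{lem:annihilation} to get $\sum_i a_i<2^t$, and reduce modulo $2^{t+1}$ to force $\sum_i a_i=r$ and $\sum_i q_i=Q$; the bound $a_i\leq r<2^t$ then makes the residue constraint automatic, so the count is a product of two weak-composition counts.
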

\begin{proof}
Divide each exponent uniquely as $e_i=2^{t+1}q_i+a_i$ with
$0\leq a_i<2^{t+1}$. For a zero column,
$\sum_i a_i<2^t$ by Lemma~\ref{lem:annihilation}. Reducing the
source-degree equation modulo $2^{t+1}$ therefore gives
$\sum_i a_i=r$, and then $\sum_i q_i=Q$.
There is no solution if $r\geq2^t$. Otherwise the two weak
compositions are independent, and $a_i<2^{t+1}$ is automatic
from $a_i\leq r$. Counting them proves the formula.
\end{proof}

Define the number of nonzero labelled columns by
\begin{equation}\label{eq:W}
 \mathcal W_k(d)=\sum_{2^t\leq d}
       \bigl(N_k(d-2^t)-Z_t(k,d-2^t)\bigr).
\end{equation}
Then $\rank M(k,d)\leq\mathcal W_k(d)$.
For example, $M(2,3)$ has only one nonzero column,
$\Sq^1(x_1x_2)=x_1^2x_2+x_1x_2^2$, so $\mathcal W_2(3)=1$.
The parity of the total coefficient sum is not a test for a zero
polynomial: $\Sq^2(x_1x_2^3)=x_1x_2^5+x_1^2x_2^4\ne0$, although
$\binom{4}{2}$ is even.

\subsection{The first square and its contracting homotopy}
Write $\partial=\Sq^1$. Exactness of $(\Pk,\partial)$ in positive
degree is \cite[Proposition~1.3.5]{WalkerWoodI}. We give a specific
homotopy because it will define the smaller matrix in
Section~\ref{sec:projection}.
For a positive-degree monomial $x^e$, let $i(e)=\min\{i:e_i>0\}$ and set
\begin{equation}\label{eq:h-definition}
 h(x^e)=
 \begin{cases}
 x^{e-\eps_{i(e)}},&e_{i(e)}\text{ is even},\\
 0,&e_{i(e)}\text{ is odd},
 \end{cases}
 \qquad h(1)=0,
\end{equation}
where $\eps_i$ is the $i$th standard exponent vector.

\begin{proposition}\label{prop:contraction}
On the positive-degree part of $\Pk$,
$\partial h+h\partial=\operatorname{id}$.
If $\rho_k(n)=\rank(\partial:\mathcal P_k^n\to \mathcal P_k^{n+1})$, then
$\rho_k(0)=0$ and
\begin{equation}\label{eq:rho-recurrence}
 \rho_k(n)+\rho_k(n-1)=N_k(n)\quad(n\geq1),\qquad
 \sum_{n\geq0}\rho_k(n)z^n=
       \frac{(1-z)^{-k}-1}{1+z}.
\end{equation}
\end{proposition}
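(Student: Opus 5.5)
The plan is to check the homotopy identity directly on a positive-degree monomial $x^e$, and then derive the rank statements from exactness. Put $i=i(e)$ and write $x^e=x_i^{e_i}y$, where $y$ involves only $x_{i+1},\ldots,x_k$. Over $\F$, the Cartan formula \eqref{eq:cartan} gives $\partial(x_j^{m})=m\,x_j^{m+1}$, so
\[
 \partial(x^e)=e_i\,x_i^{e_i+1}y+x_i^{e_i}\partial(y).
\]
Every monomial in $x_i^{e_i}\partial(y)$ still has first positive exponent $e_i$ at position $i$. The monomial $x_i^{e_i+1}y$ has first positive exponent $e_i+1$ at position $i$.

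\emph{Case $e_i$ odd.} Then $h(x^e)=0$, so we need $h\partial(x^e)=x^e$. The term $x_i^{e_i+1}y$ has even leading exponent, and $h$ sends it to $x_i^{e_i}y=x^e$. The terms $x_i^{e_i}\partial(y)$ have odd leading exponent, so $h$ kills them. Hence $(\partial h+h\partial)(x^e)=x^e$.

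\emph{Case $e_i$ even.} Then $\partial(x^e)=x_i^{e_i}\partial(y)$, and $h$ lowers the $i$th exponent on each of these monomials, giving $h\partial(x^e)=x_i^{e_i-1}\partial(y)$. When $e_i\ge 2$, $h(x^e)=x_i^{e_i-1}y$ has odd exponent $e_i-1$ at the leading position, so
\[
 \partial h(x^e)=x_i^{e_i}y+x_i^{e_i-1}\partial(y).
\]
The second terms cancel mod $2$, and the sum is $x^e$.

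The main subtlety is that $i(\cdot)$ could change under $h$, namely when $e_i-1=0$; this cannot occur because $e_i$ is even and positive. One must also check that $\partial$ never introduces a new variable before position $i$, which holds since $\partial$ only raises existing exponents. So the leading index is stable under both operations and the case analysis is complete.

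For the rank statements, the identity shows that $\partial$ is exact on $\mathcal P_k^{n}$ for $n\ge1$: if $\partial f=0$ then $f=\partial h f$. Hence $\ker(\partial|_{\mathcal P_k^n})=\im(\partial|_{\mathcal P_k^{n-1}})$ for $n\geq1$. Rank–nullity gives
\[
 N_k(n)=\rho_k(n)+\dim\ker=\rho_k(n)+\rho_k(n-1).
\]
In degree zero, $\partial(1)=0$, so $\rho_k(0)=0$. Multiplying the recurrence by $z^n$ and summing over $n\ge1$ gives
\[
 (1+z)\sum_{n\geq0}\rho_k(n)z^n=\sum_{n\ge1}N_k(n)z^n=(1-z)^{-k}-1,
\]
which is the stated generating function.
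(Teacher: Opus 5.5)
Your proof is the paper's proof: the same case split on the parity of the first positive exponent $e_{i(e)}$, the same derivation formula for $\partial$, and then exactness, rank--nullity and summation against $z^n$.

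One step uses a fact that the paper states and you omit. The homotopy identity only yields $\ker(\partial|_{\mathcal P_k^n})\subseteq\im(\partial|_{\mathcal P_k^{n-1}})$. The reverse inclusion needs $\partial^2=0$, which is the Adem relation $\Sq^1\Sq^1=0$; alternatively, $\partial^2$ is again a derivation in characteristic two and $\partial^2(x_j)=\partial(x_j^2)=0$. Without it you would only get $N_k(n)\leq\rho_k(n)+\rho_k(n-1)$.
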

\begin{proof}
The one-variable formula gives $\partial(x_i^a)=0$ for $a$ even
and $x_i^{a+1}$ for $a$ odd. Write $x^e=x_i^{e_i}v$, where
$i=i(e)$ and $v$ involves only later variables.
If $e_i$ is even, then
\[
 \partial h(x^e)=x^e+x_i^{e_i-1}\partial v,
 \qquad h\partial(x^e)=x_i^{e_i-1}\partial v.
\]
The latter identity holds term by term because differentiation of
$v$ does not change the first positive index. If $e_i$ is odd,
then $h(x^e)=0$ and
$\partial(x^e)=x_i^{e_i+1}v+x_i^{e_i}\partial v$.
The homotopy sends the first summand to $x^e$ and every term in the
second to zero. This proves the contraction in both cases.
The Adem relation $\partial^2=0$ then gives
$\ker\partial|_{\mathcal P_k^n}=\im\partial|_{\mathcal P_k^{n-1}}$ for $n>0$.
Rank-nullity proves the recurrence. Multiplying it by $z^n$ and
summing for $n\geq1$, with $\rho_k(0)=0$, gives the generating function.
\end{proof}

Set $\rho_k(-1)=0$ for notational convenience. For $d\geq1$, put
\begin{equation}\label{eq:W1}
 \mathcal W_k^{(1)}(d)=\rho_k(d-1)+
 \sum_{\substack{t\geq1\\2^t\leq d}}
       \bigl(N_k(d-2^t)-Z_t(k,d-2^t)\bigr),
\end{equation}
and put $\mathcal W_k^{(1)}(0)=0$. The dimension of a sum of images
is at most the sum of their dimensions, whence
\begin{equation}\label{eq:W1-bound}
 \rank M(k,d)\leq\mathcal W_k^{(1)}(d)\leq\mathcal W_k(d).
\end{equation}
The first-layer subtraction is
\begin{equation}\label{eq:delta1}
 \delta_1(k,d)=N_k(d-1)-Z_0(k,d-1)-\rho_k(d-1)
             =\mathcal W_k(d)-\mathcal W_k^{(1)}(d)
\end{equation}
for $d\geq1$, with $\delta_1(k,0)=0$.

\subsection{Relations among the generating squares}
A homogeneous relation of degree $a>0$ is a finitely supported tuple
$\Theta=(\Theta_t)$ of homogeneous Steenrod operations, with
$|\Theta_t|=a-2^t$ for nonzero entries, such that
\begin{equation}\label{eq:generator-relation}
 \sum_t\Sq^{2^t}\Theta_t=0.
\end{equation}
For $d\geq a$, let
$\Psi_{\Theta,d}:\mathcal P_k^{d-a}\to V(k,d)$ send
$f$ to $(\Theta_t(f))_t$. Set its image to zero when $d<a$.
Its image lies in $\ker\Phi_{k,d}$ by \eqref{eq:generator-relation};
the degree of its $t$th component is exactly $d-2^t$.

Let $E_0$ be the coordinate subspace spanned by the zero-column labels.
For a finite set $\mathscr R$ of relations, write
\[
 K_{\mathscr R}=\sum_{\Theta\in\mathscr R}\im\Psi_{\Theta,d},
 \qquad
 \Delta_{\mathscr R}=\dim(E_0+K_{\mathscr R})-\dim E_0.
\]
These dimensions are taken in the labelled domain $V(k,d)$.
The relation $\partial^2=0$ has a single nonzero entry
$\Theta_0=\partial$. Denote it by $\Theta^{(1)}$, and define
\begin{equation}\label{eq:additional-correction}
 F_1=E_0+\im\Psi_{\Theta^{(1)},d},\qquad
 \delta_{\mathscr R}^{(1)}(k,d)=
      \dim(F_1+K_{\mathscr R})-\dim F_1.
\end{equation}

\begin{theorem}\label{thm:Adem-upper}
Let $k\geq1$, $d\geq0$, and let $\mathscr R$ be a finite set of
homogeneous relations satisfying \eqref{eq:generator-relation}. Then
\begin{equation}\label{eq:Adem-upper}
 \rank M(k,d)\leq
 \mathcal W_k^{(1)}(d)-\delta_{\mathscr R}^{(1)}(k,d)
 \leq\min\{\mathcal W_k^{(1)}(d),
               \mathcal W_k(d)-\Delta_{\mathscr R}\}.
\end{equation}
\end{theorem}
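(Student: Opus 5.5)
Everything happens in the labelled domain $V(k,d)$, via rank--nullity for $\Phi_{k,d}$: $\rank M(k,d)=\dim V(k,d)-\dim\ker\Phi_{k,d}$. Any subspace $F\subseteq\ker\Phi_{k,d}$ therefore gives $\rank M(k,d)\leq\dim V(k,d)-\dim F$. The plan is to exhibit $F_1+K_{\mathscr R}$ as such a subspace and compute its dimension in terms of the quantities already introduced. The case $d=0$ is trivial, since $V(k,0)=0$ and all terms vanish, so assume $d\geq1$.

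First, $E_0\subseteq\ker\Phi_{k,d}$, since zero-column labels map to zero. Next, $\im\Psi_{\Theta,d}\subseteq\ker\Phi_{k,d}$ for each relation by \eqref{eq:generator-relation}; this applies in particular to $\Theta^{(1)}$, whose image is the embedding of $\partial(\mathcal P_k^{d-2})$ into the $t=0$ layer. Hence $F_1+K_{\mathscr R}\subseteq\ker\Phi_{k,d}$.

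Second, compute $\dim F_1$. Here $E_0=\bigoplus_t E_0^{(t)}$, where $E_0^{(t)}\subseteq\mathcal P_k^{d-2^t}$ is spanned by the monomials annihilated by $\Sq^{2^t}$, so $\dim E_0^{(t)}=Z_t(k,d-2^t)$. The image of $\Psi_{\Theta^{(1)},d}$ lies in layer $0$ and equals $\im\partial|_{\mathcal P_k^{d-2}}$. Thus $F_1=\bigl(E_0^{(0)}+\im\partial\bigr)\oplus\bigoplus_{t\geq1}E_0^{(t)}$.

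In layer $0$, the key observation is that $E_0^{(0)}+\im\partial\subseteq\ker\partial|_{\mathcal P_k^{d-1}}$: zero-column monomials are $\partial$-cycles, and $\partial^2=0$. Conversely, $\ker\partial$ in positive degree $d-1$ equals $\im\partial$ by Proposition~\ref{prop:contraction}. For $d-1=0$ the kernel is all of $\mathcal P_k^0$, which equals $E_0^{(0)}$ because $1$ is annihilated. In every case the layer-$0$ piece is exactly $\ker\partial$, of dimension $N_k(d-1)-\rho_k(d-1)$. Hence
\[
\dim V(k,d)-\dim F_1=\rho_k(d-1)+\sum_{t\geq1,\,2^t\leq d}\bigl(N_k(d-2^t)-Z_t(k,d-2^t)\bigr)=\mathcal W_k^{(1)}(d).
\]
Then $\dim(F_1+K_{\mathscr R})=\dim F_1+\delta^{(1)}_{\mathscr R}(k,d)$ by definition, which gives the first inequality. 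This identification of layer $0$ is the only nontrivial step, and it rests entirely on exactness of $(\Pk,\partial)$ together with the edge case $d=1$.

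For the second inequality, $\delta^{(1)}_{\mathscr R}\geq0$ gives the bound by $\mathcal W_k^{(1)}(d)$. For the other bound, note that $\dim V-\dim E_0=\mathcal W_k(d)$, and that $\mathcal W^{(1)}_k(d)-\delta^{(1)}_{\mathscr R}=\dim V-\dim(F_1+K_{\mathscr R})$. Since $F_1+K_{\mathscr R}\supseteq E_0+K_{\mathscr R}$, we get
\[
\dim V-\dim(F_1+K_{\mathscr R})\leq\dim V-\dim(E_0+K_{\mathscr R})=\mathcal W_k(d)-\Delta_{\mathscr R}.
\]
This completes the argument.
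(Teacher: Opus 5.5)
Your proposal is correct and follows the paper's proof essentially step for step. Both arguments apply rank--nullity to $F_1+K_{\mathscr R}\subseteq\ker\Phi_{k,d}$, identify the layer-$0$ part of $F_1$ with $\ker\partial$ in degree $d-1$ by exactness (treating $d=1$ and $d=0$ separately), and use $E_0+K_{\mathscr R}\subseteq F_1+K_{\mathscr R}$ for the final comparison; the only cosmetic difference is that you compute $\dim V(k,d)-\dim F_1=\mathcal W_k^{(1)}(d)$ directly, whereas the paper computes the equivalent quantity $\dim F_1-\dim E_0=\delta_1(k,d)$.
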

\begin{proof}
For $d\geq2$, exactness of $\partial$ in degree $d-1$ shows that
$\im\Psi_{\Theta^{(1)},d}$ is the full first-layer kernel.
The zero columns in that layer already lie in this image, and the
other zero-column spaces lie in different domain summands. Hence
\[
 \dim F_1-\dim E_0
 =N_k(d-1)-\rho_k(d-1)-Z_0(k,d-1)=\delta_1(k,d).
\]
For $d=1$ the only column is zero, and $F_1=E_0=V(k,1)$;
for $d=0$ the domain is zero. Thus the same dimension identity holds
with the stated conventions.
Let $C_k(d)=\dim V(k,d)$. Every vector of $F_1+K_{\mathscr R}$
is in $\ker\Phi_{k,d}$. Rank-nullity gives
\[
 \rank M\leq C_k(d)-\dim(F_1+K_{\mathscr R})
 =\mathcal W_k^{(1)}(d)-\delta_{\mathscr R}^{(1)}.
\]
The correction is nonnegative. Moreover
$E_0+K_{\mathscr R}\subseteq F_1+K_{\mathscr R}$, so the last
quantity is at most $C_k(d)-\dim(E_0+K_{\mathscr R})
=\mathcal W_k(d)-\Delta_{\mathscr R}$.
\end{proof}

For matrices whose columns span $F_1$ and $K_{\mathscr R}$, the
correction is computed as a rank increment, not as a sum of separate
relation ranks. In particular, including $\partial^2=0$ in
$\mathscr R$ does not subtract the first-layer kernel a second time.
The Adem relations give, for example,
\begin{equation}\label{eq:Adem22}
 \Sq^2\Sq^2+\Sq^1\Sq^2\Sq^1=0,
\end{equation}
so the corresponding tuple has
$\Theta_1=\Sq^2$, $\Theta_0=\Sq^2\Sq^1$, and all other entries zero.
This relation is evaluated explicitly in Section~\ref{subsec:k3d7}.

\subsection{The divided-power dual}
Let $\DP_k^d=(\mathcal P_k^d)^*$ have the dual monomial basis $(x^e)^*$,
which can also be written $v_1^{(e_1)}\cdots v_k^{(e_k)}$.
The transpose squares satisfy
\begin{equation}\label{eq:dual-square}
 \Sq_*^r((x^e)^*)=
 \sum_{\substack{b\in\NN^k\!,\ |b|=r\\b_i\leq e_i}}
 \left(\prod_i\binom{e_i-b_i}{b_i}\bmod2\right)(x^{e-b})^*.
\end{equation}
Write $K_k(d)=\bigcap_{r>0}\ker\Sq_*^r$.
The dual-spike module $J_k(d)$ is the $\F\GL(k,\F)$-submodule
of $K_k(d)$ generated by the dual monomials $(x^e)^*$ for which
$x^e$ is a spike; this is
\cite[Definition~9.4.5]{WalkerWoodI}.

\begin{proposition}\label{prop:dual-certificate}
Let $L\subseteq K_k(d)$ have dimension $\ell$. Then
\begin{equation}\label{eq:dual-upper}
 \rank M(k,d)\leq N_k(d)-\ell.
\end{equation}
\end{proposition}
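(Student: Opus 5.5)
The plan is to identify $K_k(d)$ with the annihilator of the hit space, so that \eqref{eq:dual-upper} becomes an annihilator dimension count. The pairing is the evaluation pairing between $\DP_k^d=(\mathcal P_k^d)^*$ and $\mathcal P_k^d$, for which the dual monomials $(x^e)^*$ form the basis dual to the monomial basis. By construction \eqref{eq:dual-square}, $\Sq_*^r:\DP_k^d\to\DP_k^{d-r}$ is the transpose of $\Sq^r:\mathcal P_k^{d-r}\to\mathcal P_k^d$. Concretely, the coefficient of $(x^{e-b})^*$ in $\Sq_*^r((x^e)^*)$ should equal the coefficient of $x^e$ in $\Sq^r(x^{e-b})$. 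The latter coefficient is $\prod_i\binom{e_i-b_i}{b_i}$ mod $2$ by the one-variable formula $\Sq^{b}(x^{a})=\binom{a}{b}x^{a+b}$ and the Cartan formula, which is consistent with \eqref{eq:cartan} via Lucas. I will take this transposition identity as the defining property, checking it only on basis elements.

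The key steps are as follows. First, for $\phi\in\DP_k^d$ and $g\in\mathcal P_k^{d-r}$, the transposition gives $\langle\phi,\Sq^r g\rangle=\langle\Sq_*^r\phi,g\rangle$. Second, if $\phi\in K_k(d)$, then $\Sq_*^r\phi=0$ for every $r>0$, so $\phi$ vanishes on every $\Sq^r(g)$. In particular $\phi$ vanishes on every column $\Sq^{2^t}(g)$ of $M(k,d)$, hence on $\im\Phi_{k,d}=(\Aplus\Pk)_d$ by Proposition~\ref{thm:matrix-rank}. This gives $L\subseteq K_k(d)\subseteq\operatorname{Ann}(\im\Phi_{k,d})$.

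Third, the annihilator in the dual of an $N_k(d)$-dimensional space of a subspace of dimension $\rank M(k,d)$ has dimension $N_k(d)-\rank M(k,d)$. Therefore $\ell\leq N_k(d)-\rank M(k,d)$, which rearranges to \eqref{eq:dual-upper}. Equivalently, $\dim\Qkd\geq\ell$. One may also observe that $K_k(d)$ equals the full annihilator, because $\Sq_*^r\phi\neq0$ would produce some $g$ with $\langle\phi,\Sq^r g\rangle\neq0$; this equality is not needed, however.

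The only real obstacle is the transposition identity in the first step, namely matching the binomial coefficients in \eqref{eq:dual-square} with those in the Cartan expansion. This reduces to one variable by multiplicativity of both sides over the tensor factors. In one variable, $\Sq^b(x^{a})=\binom{a}{b}x^{a+b}$ with $a=e-b$, which is exactly $\binom{e_i-b_i}{b_i}$. Everything after that is linear algebra over $\F$.
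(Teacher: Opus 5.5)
Your proposal is correct and follows the paper's proof. The transpose identity $\langle\Sq_*^r\xi,f\rangle=\langle\xi,\Sq^r f\rangle$ places $K_k(d)$ inside the annihilator of $(\Aplus\Pk)_d$, and the perfect degreewise pairing then gives $\ell\leq N_k(d)-\rank M(k,d)$. The paper asserts the equality $K_k(d)=\operatorname{Ann}((\Aplus\Pk)_d)$, whereas you note that the inclusion suffices. You also check the transpose identity from the Cartan formula instead of taking it as given.
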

\begin{proof}
The transpose identity
$\langle\Sq_*^r\xi,f\rangle=\langle\xi,\Sq^r f\rangle$
shows that $K_k(d)$ is the annihilator of $(\Aplus\Pk)_d$.
The degreewise pairing is perfect, so
$\dim K_k(d)=N_k(d)-\dim(\Aplus\Pk)_d$.
The inclusion $L\subseteq K_k(d)$ gives the assertion.
\end{proof}

The coordinate spike functionals give $\ell=\mathcal S_k(d)$.
Larger spaces arise from the dual spike module $J_k(d)\subseteq K_k(d)$
and the flag constructions in \cite[Chapters~23--24]{WalkerWoodII}.
The strongly spike-free subspace $SF_k(d)$ of the cohit space
satisfies
\[
 SF_k(d)\cong(K_k(d)/J_k(d))^*,\qquad
 \Qkd/SF_k(d)\cong J_k(d)^*
\]
as vector spaces, with the transpose action in the module formulation
\cite[Definition~30.2.1 and Proposition~30.2.2]{WalkerWoodII}.
It is not a family of additional zero rows of the original matrix.
Likewise, independent admissible classes from the cited hit-problem
calculations give lower cohit bounds; they need not individually index
zero rows. For instance, $[x_1^2x_2]\ne0$ in degree three, although
$x_1^2x_2$ occurs in $\Sq^1(x_1x_2)$.

\section{Lower bounds from matrix minors}\label{sec:lower}
We construct independent column families from their supports. Besides
proving the individual bounds, we determine the containments among them.
This comparison is necessary before combining the estimates.

\subsection{An explicit singleton family}
Fix $t\geq0$, $R=2^t$, and $m=2R$.
If a source exponent $e_i$ has $t$th digit one and $m\mid e_j$
for every $j\ne i$, then \eqref{eq:cartan} has only the allocation
$b_i=R$, $b_j=0$ for $j\ne i$. Thus
\begin{equation}\label{eq:singleton-source}
 \Sq^R(x^e)=x^{e+R\eps_i}.
\end{equation}
For $d\geq R$, write $d=mN+r$, $0\leq r<m$.
Define $\mathcal D_t(k,d)$ to consist of targets all of whose
exponents are divisible by $m$ when $r=0$. When $0<r<R$, require
one exponent to be congruent to $r$ modulo $m$ and at least $R$,
and all other exponents to be divisible by $m$. For $r\geq R$ or
$d<R$, put $\mathcal D_t(k,d)=\varnothing$.
Let $D_t(k,d)=|\mathcal D_t(k,d)|$.

\begin{proposition}\label{prop:D}
Let $k\geq1$, $t\geq0$, and $d\geq2^t$. With $R,m,N,r$ as above,
\begin{equation}\label{eq:D}
 D_t(k,d)=
 \begin{cases}
 \displaystyle\binom{N+k-1}{k-1},&r=0,\\[5pt]
 \displaystyle k\binom{N+k-2}{k-1},&0<r<R,\\[5pt]
 0,&R\leq r<m.
 \end{cases}
\end{equation}
Every target in $\mathcal D_t(k,d)$ is a singleton column target.
Moreover, any singleton column in the $t$th layer, whether or not
its target belongs to $\mathcal D_t(k,d)$, satisfies $d\bmod m<R$.
\end{proposition}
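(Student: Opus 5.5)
The proof has three parts: the count, the singleton property of each target, and the necessary condition $d\bmod m<R$.

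\emph{The count.} If $r=0$, every exponent of a target is a multiple of $m$. Writing $e_i=mq_i$ with $\sum q_i=N$ gives $\binom{N+k-1}{k-1}$ weak compositions. If $0<r<R$, we first choose the distinguished index $i$, in $k$ ways. This index is unique: any exponent that is not divisible by $m$ must be the one congruent to $r$, since $r\neq0$. We write $e_i=mq_i+r$ and $e_j=mq_j$ for $j\ne i$. Then $\sum_j q_j=N$. The requirement $e_i\geq R>r$ forces $q_i\geq1$. Substituting $q_i'=q_i-1$ leaves a weak composition of $N-1$ into $k$ parts, so there are $\binom{N+k-2}{k-1}$ of them. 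When $N=0$ this gives zero, consistent with the binomial convention. The case $r\geq R$ is empty by definition.

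\emph{Each target is a singleton column target.} We exhibit a source. For a target $y$ in the $r=0$ case, some exponent $y_i$ is positive, since $d\geq R>0$; hence $y_i\geq m$. Take the source $e=y-R\eps_i$. Then $e_i\equiv R\pmod m$, so its $t$th digit is one, and all other exponents are divisible by $m$. By \eqref{eq:singleton-source}, $\Sq^R(x^e)=x^y$. In the case $0<r<R$, use the distinguished index $i$. We have $y_i-R\geq0$, and $y_i-R\equiv r-R\equiv r+R\pmod m$. Since $r<R$, the number $r+R$ lies in $[R,m)$, so the $t$th digit of $y_i-R$ is one. The other exponents are divisible by $m$, so \eqref{eq:singleton-source} again applies and the column is a singleton with target $x^y$.

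\emph{The necessary condition.} Suppose $\Sq^R(x^e)$ is a single monomial $x^{e+b}$. I expect this to be the main obstacle, because it requires showing that the single allowed allocation forces strong divisibility. Set $a_i=e_i\bmod m$. By Lemma~\ref{lem:annihilation}, $\sum a_i\geq R$. Allocations correspond to choosing binary digits of the $a_i$ (positions $0,\dots,t$) that sum to $R$.

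I claim that uniqueness forces $\sum a_i<2R$ and, more sharply, $\sum_i a_i-R<R$. Suppose the allocation $b$ is unique. Consider the leftover $a-b$, meaning the unchosen digits, with total $\sum a_i-R$. If the leftover total is at least $R$, Lemma~\ref{lem:binary-selection} applied to the leftover digits yields a second selection of sum $R$ disjoint from $b$. That is a different allocation, provided $b\neq0$, which holds since $R>0$. This contradicts uniqueness, so $\sum a_i<2R$.

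The source degree is $d-R\equiv\sum a_i\pmod m$. Hence $d\equiv \sum a_i+R\pmod m$, and this value lies in $[2R,3R)$. Reducing modulo $m=2R$, we get $d\bmod m=\sum a_i-R\in[0,R)$, as required.

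Finally I would double-check the edge case $t=0$, where $R=1$ and $m=2$: the same argument gives $d$ even. I would also note that for $r\geq R$ the emptiness of $\mathcal D_t$ is consistent with this last statement.
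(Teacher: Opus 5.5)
Your proposal is correct and follows the paper's proof essentially step for step. It uses the same composition counts, with $q_i\geq1$ forced when $0<r<R$, and the same explicit sources $y-R\eps_i$ fed into \eqref{eq:singleton-source}. It also uses the same key step: a leftover digit total $\sum_i a_i-R\geq R$ would, via Lemma~\ref{lem:binary-selection}, give a disjoint second Cartan tuple, which forces $R\leq\sum_i a_i<2R$ and hence $d\bmod m=\sum_i a_i-R<R$. Your remark about $N=0$ is vacuous, since $d\geq R$ and $r<R$ already force $N\geq1$ in that case.
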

\begin{proof}
For $r=0$, write the target exponents as $ms_1,\ldots,ms_k$
with $\sum_i s_i=N$. Since $d>0$, some $s_i>0$.
Subtracting $R$ from that exponent produces residue $R$ at position
$i$ and zero residues elsewhere. Equation~\eqref{eq:singleton-source}
returns the target, and weak compositions give the first count.
For $0<r<R$, the distinguished exponent is $r+ms_i$ with
$s_i\geq1$; the others are $ms_j$. For each distinguished index,
subtracting one from $s_i$ leaves a weak composition of $N-1$.
The distinguished index is unique, giving the factor $k$.
Its source after subtraction of $R$ has residue $R+r$, with $t$th
digit one, so \eqref{eq:singleton-source} again applies.

For the necessary condition, let $a_i=e_i\bmod m$ for an arbitrary
singleton source and put $A=\sum_i a_i$. Nonvanishing implies $A\geq R$.
The singleton condition says that there is exactly one labelled
selection of available binary digits of total weight $R$.
If $A\geq2R$, removing those selected digits leaves total weight
at least $R$. Lemma~\ref{lem:binary-selection} supplies a disjoint
second selection, giving a different Cartan tuple and a contradiction.
Hence $R\leq A<2R$. Since $d-R\equiv A\pmod{2R}$, it follows that
$d\bmod2R=A-R<R$.
\end{proof}

The necessary residue condition does not characterize the selected
family. For example,
\begin{equation}\label{eq:singleton-outside}
 \Sq^4(x_1^2x_2^2x_3)=x_1^4x_2^4x_3,
\end{equation}
and $(4,4,1)$ belongs to none of the sets $\mathcal D_t(3,9)$.
In fact the selected sets satisfy the following nesting property.

\begin{proposition}\label{prop:nested-singletons}
Let $d>0$ and $a=\nu_2(d+1)$. If $2^a>d$, then all active
$\mathcal D_t(k,d)$ are empty. Otherwise
\begin{equation}\label{eq:nested-singletons}
 \bigcup_{2^t\leq d}\mathcal D_t(k,d)=\mathcal D_a(k,d),
 \qquad
 D_k^{\cup}(d):=\left|\bigcup_{2^t\leq d}\mathcal D_t(k,d)\right|
       =\max_{2^t\leq d}D_t(k,d).
\end{equation}
Every total row order satisfies $D_k^{\cup}(d)\leq\lambda_\prec(M(k,d))$.
Put $D_k^{\cup}(0)=0$.
\end{proposition}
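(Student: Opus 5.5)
The plan is to reduce everything to the binary digits of $d$. By Proposition~\ref{prop:D}, for $2^t\leq d$ the set $\mathcal D_t(k,d)$ is nonempty exactly when $r=d\bmod 2^{t+1}<2^t$, that is, when $\alpha_t(d)=0$. Let $a=\nu_2(d+1)$. Then $a$ is the number of trailing ones of $d$: one has $\alpha_j(d)=1$ for $j<a$ and $\alpha_a(d)=0$.

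\textbf{Case $2^a>d$.} Every active index $t$ with $2^t\leq d$ satisfies $t<a$. Hence $\alpha_t(d)=1$, so $r\geq R$ and $\mathcal D_t(k,d)=\varnothing$ by definition. This proves the first assertion.

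\textbf{Case $2^a\leq d$.} Here $\mathcal D_a(k,d)$ is one of the sets in the union, so it suffices to prove $\mathcal D_t(k,d)\subseteq\mathcal D_a(k,d)$ for every active $t$. For $t<a$ the set is empty, as above. Fix $t>a$ with $\alpha_t(d)=0$, and put $m_a=2^{a+1}$. Then $d\bmod m_a=2^a-1$. I split according to $a$.

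\emph{Subcase $a=0$.} Here $d$ is even, so $r=d\bmod 2^{t+1}$ is even. A target in $\mathcal D_t$ has all exponents divisible by $2^{t+1}$, except possibly one, which is congruent to the even number $r$ modulo $2^{t+1}$. Hence every exponent is even, and the target lies in $\mathcal D_0(k,d)$, which in this subcase is the $r=0$ clause with $m=2$.

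\emph{Subcase $a\geq1$.} Now $0<2^a-1<2^a$, so $\mathcal D_a$ is defined by its second clause. A target in $\mathcal D_t$ cannot come from the clause $r=0$: if all exponents were divisible by $2^{t+1}$, then $d\equiv0\pmod{m_a}$, a contradiction. So the target has all exponents but one divisible by $2^{t+1}$, hence by $m_a$. The distinguished exponent is then forced to be $\equiv d\equiv 2^a-1\pmod{m_a}$. It is at least $2^t\geq2^{a+1}>2^a$ by the definition of $\mathcal D_t$. These are exactly the membership conditions for $\mathcal D_a(k,d)$.

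In both subcases the inclusion holds, which gives the displayed union and the equality $D_k^{\cup}(d)=D_a(k,d)=\max_t D_t(k,d)$.

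\textbf{The bound by $\lambda_\prec$.} By Proposition~\ref{prop:D}, each target in the union is the unique support row of a singleton column. Under any total order $\prec$, that row is therefore the leading row of its column. Distinct targets give distinct leading rows, so $D_k^{\cup}(d)\leq\lambda_\prec(M(k,d))$.

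The main point of care is the subcase bookkeeping in the second case. One must check that the $r=0$ clause of $\mathcal D_t$ can occur only when $a=0$, and that the lower bound $\geq R_a$ on the distinguished exponent is inherited from $\geq 2^t$.
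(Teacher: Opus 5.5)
Your proof is correct and follows essentially the same route as the paper. You identify $a$ as the position of the first zero bit of $d$ and use that a nonempty $\mathcal D_t(k,d)$ forces $\alpha_t(d)=0$. You then treat $a=0$ (all exponents even) and $a\geq1$ separately; in the latter case the nondistinguished exponents are divisible by $2^{t+1}$, hence by $2^{a+1}$, and the distinguished exponent is $\equiv 2^a-1 \pmod{2^{a+1}}$ and at least $2^t$. You finish with the observation that a singleton target is the leading row under every total order. Your explicit exclusion of the $r=0$ clause of $\mathcal D_t(k,d)$ when $a\geq1$ is a small point that the paper leaves implicit.
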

\begin{proof}
The index $a$ is the first zero bit of $d$. A nonempty
$\mathcal D_t(k,d)$ requires the $t$th bit of $d$ to be zero, so
$t\geq a$. If $2^a>d$, there is no such active index.
If $a=0$, $d$ is even and every exponent of every target in
$\mathcal D_t(k,d)$ is even. Thus these sets lie in
$\mathcal D_0(k,d)$.
Suppose $a>0$ and $t>a$. The residue of $d$ modulo $2^{a+1}$ is
$2^a-1$. All nondistinguished exponents of a target in $\mathcal D_t$
are divisible by $2^{t+1}$ and hence by $2^{a+1}$.
The distinguished exponent is congruent to $2^a-1$ modulo
$2^{a+1}$ and is at least $2^t\geq2^a$.
The target therefore belongs to $\mathcal D_a(k,d)$.
This proves the union identity and its cardinality statement.
Each selected singleton target is the leading row of its column for
any order, proving the last inequality.
\end{proof}

For example, if $d>0$ is even, the union count is
$\binom{d/2+k-1}{k-1}$. If $d$ is odd but not $2^b-1$, the count
is $k\binom{N+k-2}{k-1}$ with
$N=\lfloor d/2^{a+1}\rfloor$. This union supplies a permutation
minor but does not improve the maximum of the selected layer counts.

\subsection{Triangular minors and uniquely restricted matchings}
\begin{lemma}\label{lem:triangular}
Let $A$ be a finite matrix over $\F$ with an ordered row set.
Nonzero columns with distinct leading rows are linearly independent.
\end{lemma}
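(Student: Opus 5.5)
The plan is the standard triangularity argument: take a minimal nontrivial dependence and look at its largest leading row. Let $c_1,\ldots,c_s$ be nonzero columns of $A$ with pairwise distinct leading rows $\LM_\prec(c_1),\ldots,\LM_\prec(c_s)$. These rows lie in the totally ordered row set, so we may relabel the columns so that
\[
 \LM_\prec(c_1)\succ\LM_\prec(c_2)\succ\cdots\succ\LM_\prec(c_s).
\]

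Suppose, for a contradiction, that $\sum_{j\in T}c_j=0$ for some nonempty $T\subseteq\{1,\ldots,s\}$. Over $\F$ every nontrivial linear relation has this form. Let $j_0=\min T$, so that $\rho=\LM_\prec(c_{j_0})$ is the largest leading row among the columns indexed by $T$. Then:

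\begin{itemize}
\item[$\bullet$] The entry of $c_{j_0}$ in row $\rho$ is one, by the definition of a leading row.
\item[$\bullet$] For every other $j\in T$ we have $\LM_\prec(c_j)\prec\rho$. Every support row of $c_j$ is at most its leading row, so $c_j$ has entry zero in row $\rho$.
\end{itemize}

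Hence the $\rho$-coordinate of $\sum_{j\in T}c_j$ equals one, which contradicts the assumed relation. The columns are therefore linearly independent.

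None of these steps is a real obstacle. The only point needing care is that the leading rows must be pairwise distinct and the row order must be total. Together these make the maximum leading row in $T$ unique and strictly larger than every support row of the remaining columns. Equivalently, after reordering rows and columns, the square submatrix on the rows $\LM_\prec(c_j)$ is lower unitriangular, so its determinant is one.
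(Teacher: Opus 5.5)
Your proposal is correct and is essentially the paper's argument. The paper orders the leading rows increasingly and observes that the resulting square submatrix is upper unitriangular with determinant one; your maximal-leading-row contradiction is the same triangularity fact, and you note that equivalence yourself.
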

\begin{proof}
Order their leading rows increasingly and put each selected column
beside its leading row. A column has no nonzero entry in a larger
selected row. The resulting square submatrix is upper triangular
with diagonal entries one. Its determinant is one.
\end{proof}

For a matrix $A$, choose nonzero labelled columns $C_0$ and distinct
rows $p(c)\in\Supp(c)$. Impose the directed constraints
\begin{equation}\label{eq:pivot-arrows}
 r\longrightarrow p(c),\qquad
 r\in\Supp(c)\setminus\{p(c)\},\quad c\in C_0.
\end{equation}
Call the chosen pivots acyclic if this graph has no directed cycle,
and let $\tau(A)$ be the largest possible $|C_0|$.
A matching in a bipartite graph is uniquely restricted if it is the
unique perfect matching of the subgraph induced by its matched
vertices \cite{GolumbicHirstLewenstein2001}.

\begin{theorem}\label{thm:optimal-order}
Let $A$ be a finite matrix over $\F$, and let $G(A)$ be its support
graph. Then
\begin{equation}\label{eq:optimal-order}
 \tau(A)=\max_\prec\lambda_\prec(A)=\nu_{\rm ur}(G(A))
 \leq\rank A,
\end{equation}
where $\nu_{\rm ur}$ is the maximum uniquely restricted matching number.
\end{theorem}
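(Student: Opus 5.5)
The plan is to prove the chain of equalities by three comparisons: $\max_\prec\lambda_\prec(A)\le\tau(A)$, $\tau(A)\le\max_\prec\lambda_\prec(A)$, and $\tau(A)=\nu_{\rm ur}(G(A))$. The final inequality with $\rank A$ then follows from Lemma~\ref{lem:triangular}. Throughout, $\prec$ ranges over all total orders on the rows.

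\emph{Orders to pivots.} Fix a total order $\prec$. For each distinct leading row, choose one nonzero column having it as $\LM_\prec$. This gives a set $C_0$ of size $\lambda_\prec(A)$, and we set $p(c)=\LM_\prec(c)$. Every arrow $r\to p(c)$ in \eqref{eq:pivot-arrows} has $r\in\Supp(c)\setminus\{p(c)\}$, so $r\prec p(c)$. Thus all arrows strictly increase in $\prec$, and the pivot graph is acyclic. Hence $\lambda_\prec(A)\le\tau(A)$.

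\emph{Pivots to orders.} Conversely, take acyclic pivots on $C_0$. Choose a topological ordering of the pivot graph, with every arrow pointing upward, and extend it arbitrarily to a total order $\prec$ on all rows. For $c\in C_0$, every $r\in\Supp(c)\setminus\{p(c)\}$ has an arrow $r\to p(c)$, so $r\prec p(c)$. Therefore $p(c)=\LM_\prec(c)$. Since the pivots are distinct, $\lambda_\prec(A)\ge|C_0|$. Both comparisons together give $\tau(A)=\max_\prec\lambda_\prec(A)$. Applying Lemma~\ref{lem:triangular} to the columns of $C_0$, which have distinct leading rows for this $\prec$, gives $\tau(A)\le\rank A$.

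\emph{Acyclic pivots and uniquely restricted matchings.} Pivot choices on $C_0$ correspond exactly to matchings $\mathcal M=\{(p(c),c):c\in C_0\}$ of $G(A)$. I will show that $\mathcal M$ is uniquely restricted if and only if the pivot graph is acyclic.

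First, every vertex on a directed cycle is the head of some arrow, so it is a pivot. Arrows have no loops, since $r\ne p(c)$. So a directed cycle has the form $p(c_1)\to p(c_2)\to\cdots\to p(c_s)\to p(c_1)$ with $s\ge2$, distinct $c_i$, and $p(c_i)\in\Supp(c_{i+1})$ (indices modulo $s$).

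Replacing each matched edge $(p(c_{i+1}),c_{i+1})$ by $(p(c_i),c_{i+1})$ yields a second perfect matching of the subgraph induced by the matched vertices. Hence a directed cycle means $\mathcal M$ is not uniquely restricted.

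Conversely, suppose the induced subgraph has a second perfect matching $\mathcal M'$. Then $\mathcal M\triangle\mathcal M'$ contains an alternating cycle $c_1,p(c_1)$-type edges alternating with non-matched edges $(p(c_i),c_{i+1})$. Reading it as above produces a directed cycle among the pivots.

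Maximizing over matchings then gives $\tau(A)=\nu_{\rm ur}(G(A))$. This is the Golumbic--Hirst--Lewenstein characterization of uniquely restricted matchings by the absence of alternating cycles \cite{GolumbicHirstLewenstein2001}. I would either reprove it inline as above or cite it.

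The main obstacle is the bookkeeping in this last correspondence. One must check that the unmatched edges of an alternating cycle all join pivot rows to columns of $C_0$ inside the induced subgraph, and that the orientation convention $r\to p(c)$ turns such a cycle into a directed cycle rather than its reverse. The argument is insensitive to the direction of traversal, since reversing an alternating cycle reverses the directed cycle, so I would fix one traversal and verify the incidences $p(c_i)\in\Supp(c_{i+1})$ carefully.
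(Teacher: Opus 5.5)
Your proposal is correct and follows the paper's proof essentially step for step. Leading rows of a fixed order give acyclic pivots; a topological order of an acyclic pivot graph recovers the pivots as leading rows; and directed cycles among pivots correspond to alternating cycles, hence to a second perfect matching on the matched vertices. The bookkeeping you flag is immediate: the vertices of the induced subgraph are exactly the pivots and the columns of $C_0$, so every non-matching edge there has the form $(p(c_i),c_{i+1})$ with $p(c_i)\in\Supp(c_{i+1})\setminus\{p(c_{i+1})\}$.
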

\begin{proof}
For a fixed order, choose one column for each distinct leading row.
Every arrow in \eqref{eq:pivot-arrows} then points upwards in the
order, so these pivots are acyclic. Conversely, a topological order
of an acyclic constraint graph makes each selected $p(c)$ larger
than all other rows in its column. Extending this order to all rows
gives $|C_0|\leq\lambda_\prec(A)$. This proves the first equality,
and Lemma~\ref{lem:triangular} gives the rank inequality.

For the matching equality, match each $c\in C_0$ to $p(c)$.
A directed cycle in the constraint graph must consist of matched
rows: every vertex with an incoming arrow is a chosen pivot.
If these rows are $p(c_1),\ldots,p(c_j)$, the cycle says, after a
cyclic relabelling, that $p(c_i)$ also lies in the support of
$c_{i+1}$. These incidences and the matching edges form an alternating
cycle. Conversely, an alternating cycle among matched vertices gives
a directed cycle. A second perfect matching on the same vertices
exists precisely when there is an alternating cycle: the symmetric
difference of two perfect matchings is a union of alternating cycles,
and switching along one such cycle gives a second matching.
Thus the chosen pivots are acyclic exactly when the matching is
uniquely restricted. Maximizing proves the remaining equality.
\end{proof}

This identifies the row-order formulation with the graph invariant
of \cite{GolumbicHirstLewenstein2001}. A given acyclic pivot family
can be checked by a topological sort. Finding a maximum uniquely
restricted matching is NP-hard for general bipartite graphs; this
statement does not by itself establish NP-hardness for the particular
support graphs $G(k,d)$. The four-order value $\lamstd(M)$ is a
readily evaluated lower bound for $\tau(M)$, not a claim of optimality.

For completeness, an exact finite optimization uses binary variables
$y_{c,p},z_c$ and integers $u_r\in\{1,\ldots,N\}$, where $N$ is the
number of rows. Require
\[
 \sum_{p\in\Supp(c)}y_{c,p}=z_c,\qquad
 \sum_{c:p\in\Supp(c)}y_{c,p}\leq1,\qquad
 u_p\geq u_r+1-N(1-y_{c,p})
\]
for each $r\in\Supp(c)\setminus\{p\}$. When $y_{c,p}=1$, the last
inequality enforces the corresponding arrow. When $y_{c,p}=0$, it
is automatic in the specified height range. Maximizing $\sum_c z_c$
therefore computes $\tau(A)$.

\subsection{Columns with acyclic incidence graph}
\begin{proposition}\label{lem:Berge-independent}
Let $C_0$ be a finite family of columns with support size at least two.
If its bipartite incidence graph is a forest, then its columns are
linearly independent and admit acyclic pivots.
\end{proposition}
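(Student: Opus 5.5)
We reduce the statement to Theorem~\ref{thm:optimal-order} and Lemma~\ref{lem:triangular} by building acyclic pivots one column at a time, peeling leaves from the forest. Let $H$ be the bipartite incidence graph of $C_0$: its vertices are the columns in $C_0$ and the rows in the union of their supports, with an edge $r$--$c$ whenever $r\in\Supp(c)$. Any finite forest with at least one edge has a leaf. Here every column vertex has degree $|\Supp(c)|\geq2$, so every leaf is a row vertex.

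The induction is on $|C_0|$; the empty family is trivial. Choose a component of $H$ containing a column. Because every column has degree at least two, this component has an edge and hence at least two leaves. All of them are rows.

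\begin{itemize}
\item Take a leaf row $r$ and let $c$ be its unique neighbour.
\item Set $p(c)=r$, remove $c$ from the family, and apply induction to $C_0\setminus\{c\}$. Its incidence graph is a subgraph of $H$, so it is still a forest, and every remaining support still has size at least two.
\item Induction gives distinct acyclic pivots $p(c')$ for the remaining columns, with each $p(c')\in\Supp(c')$.
\end{itemize}

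Since $r$ lies only in $\Supp(c)$, it is not the pivot of any remaining column, so the pivots stay distinct. The columns $c$ chosen in this way also form a sequence $c_1,c_2,\ldots$ in order of removal.

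Acyclicity is the delicate step. The new arrows are $s\to r$ for $s\in\Supp(c)\setminus\{r\}$. A directed cycle through $r$ would need an arrow leaving $r$. Such an arrow requires $r\in\Supp(c')\setminus\{p(c')\}$ for some other column $c'$, which is impossible because $r$ is a leaf. So $r$ is a sink, and no new cycle is created.

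Equivalently, order the rows so that later-removed pivots are larger. Then each column's pivot exceeds all of its other support rows, because those rows are either unmatched or are pivots of columns removed earlier. Lemma~\ref{lem:triangular} then gives independence. Alternatively, Theorem~\ref{thm:optimal-order} gives $|C_0|$ independent columns.

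The main care needed is in the sink argument. In particular, the hypothesis that supports have size at least two is exactly what forces the leaves to be rows rather than columns, and this is where that hypothesis is used.
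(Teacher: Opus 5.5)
Your leaf-peeling construction and your inductive sink argument for acyclicity are correct, and they follow the paper's proof. The support condition forces every leaf to be a row, and each peeled leaf becomes the pivot of its unique neighbouring column. The argument that $r$ receives arrows but emits none is a clean way to certify that no new cycle appears.

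Your ``Equivalently'' paragraph, however, has the order backwards. Let $c_1,c_2,\ldots$ be the columns in order of removal. At step $j$, the pivot $p(c_j)$ was a leaf whose only neighbour among the remaining columns was $c_j$, so it lies in no column $c_i$ with $i>j$. Hence any other support row of $c_i$ that is a pivot at all is the pivot of a column removed \emph{later}, not earlier. The correct order therefore puts earlier-removed pivots higher; this is the paper's ``reverse order of selection''. Your own sink argument says the same thing, since the first pivot $r$ receives arrows from every other row of its column.

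For a concrete failure, take $\Supp(c_1)=\{r_1,s\}$ and $\Supp(c_2)=\{s,r_2\}$, whose incidence graph is a path. Peel $r_1$ first, then choose $p(c_2)=s$, which is a leaf once $c_1$ is gone. With unmatched rows placed lowest, your order gives $r_2<r_1<s$. Both columns then have leading row $s$, so Lemma~\ref{lem:triangular} does not apply.

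The fix is immediate. Either reverse your order, or take any topological order of the constraint graph you have already shown to be acyclic. In either case each $p(c)$ is the leading row of $c$, as in the proof of Theorem~\ref{thm:optimal-order}, and Lemma~\ref{lem:triangular} gives independence. Your alternative appeal to Theorem~\ref{thm:optimal-order} is also valid when applied to the matrix whose column set is exactly $C_0$. With this correction the proof is complete.
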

\begin{proof}
Every nontrivial component has a leaf. No column vertex is a leaf,
because its degree is at least two, so there is a row leaf $r$ with
unique neighbouring column $c$. Select $r$ as the pivot of $c$ and
delete this column and any isolated rows. Repetition gives distinct
pivots for every column. Order the pivots in reverse order of
selection, putting unused rows before them. Each pivot is then
larger than every other support row of its column: later-selected
columns do not contain an earlier row leaf. Thus the pivots are
acyclic, and Lemma~\ref{lem:triangular} proves independence.
Equivalently, a nonempty sum of selected columns cannot vanish,
since an incidence forest on those columns still has a row leaf
with coefficient one in the sum.
\end{proof}

For supports of size two this is the usual independence of graph
incidence columns for a forest. The condition is sufficient, not
necessary, for larger supports. It is already included in the
acyclic-pivot bound of Theorem~\ref{thm:optimal-order}.

\subsection{Congruence classes and simplicial boundary matrices}
Fix $R=2^t$ and $m=2R$. For $1\leq q\leq k$ and $N\geq0$ suppose
\begin{equation}\label{eq:q-degree}
 d=2RN+(q+1)R.
\end{equation}
For a $q$-element set $J\subseteq\{1,\ldots,k\}$ and a weak
composition $A=(a_i)$ of $N$, define
\begin{equation}\label{eq:q-source}
 g(A;J)=\prod_{j\in J}x_j^{ma_j+R}\prod_{j\notin J}x_j^{ma_j}.
\end{equation}
Its degree is $d-R$. In \eqref{eq:cartan}, the only possible positive
allocation is to place the entire $R$ in one variable indexed by $J$.
Thus this column has exactly $q$ terms.
Put $U=A+\mathbf1_J$, so $|U|=N+q$ and $J\subseteq\Supp(U)$.
For $I\subseteq\Supp(U)$ of size $q-1$, put
\begin{equation}\label{eq:q-row}
 f_{U,I}=\prod_{i\in I}x_i^{m(U_i-1)+R}
                       \prod_{i\notin I}x_i^{mU_i}.
\end{equation}
Then
\begin{equation}\label{eq:q-boundary}
 \Sq^R(g(U-\mathbf1_J;J))=\sum_{p\in J}f_{U,J\setminus\{p\}}.
\end{equation}

\begin{proposition}\label{prop:q-components}
Fix $U\in\NN^k$ with $|U|=N+q$ and $s=|\Supp(U)|\geq q$.
The columns \eqref{eq:q-boundary}, indexed by the $q$-subsets of
$\Supp(U)$, form the augmented simplicial boundary matrix
$\partial_{q-1}:C_{q-1}(\Delta^{s-1};\F)\to C_{q-2}(\Delta^{s-1};\F)$.
Its rank is $\binom{s-1}{q-1}$.
Distinct $U$ give disjoint row sets. When $q=1$, the matrix is the
augmentation matrix with $s$ columns and one row, and has rank one.
\end{proposition}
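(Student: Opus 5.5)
Fix $U$ and put $S=\Supp(U)$, $s=|S|$. The plan is to set up explicit bijections: columns correspond to $(q-1)$-simplices of the full simplex $\Delta^{s-1}$ on the vertex set $S$, and rows correspond to $(q-2)$-simplices. Under these bijections \eqref{eq:q-boundary} becomes the mod-$2$ boundary formula.

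\textbf{Step 1: the bijections.} A $q$-subset $J\subseteq S$ gives the weak composition $A=U-\mathbf1_J$ of $N$. Since $J\subseteq S$, every entry of $A$ is nonnegative, so $J$ indexes the column $g(A;J)$. A row $f_{U,I}$ with $I\subseteq S$ and $|I|=q-1$ is sent to the face $I$. I will check that $(U,I)\mapsto f_{U,I}$ is injective by reading off residues modulo $m$. The exponent of $x_i$ in $f_{U,I}$ is $\equiv R\pmod m$ exactly when $i\in I$, and $\equiv0$ otherwise. This recovers $I$, and then $U_i$ is recovered from the quotient: it is $(e_i-R)/m+1$ for $i\in I$ and $e_i/m$ otherwise. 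Injectivity in both $U$ and $I$ gives the claim that distinct $U$ have disjoint row sets.

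\textbf{Step 2: identification with the boundary.} Equation \eqref{eq:q-boundary}, which the paper has already derived from the unique-allocation remark, sends the column of $J$ to $\sum_{p\in J}f_{U,J\setminus\{p\}}$. Modulo $2$ this is exactly $\partial_{q-1}$ on the simplex $J$, with no signs to track. For $q=1$, the sum has one term $f_{U,\varnothing}$, so every column equals the single row vector $(1)$. This is the augmentation $\varepsilon$, which has rank one.

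\textbf{Step 3: the rank.} I would not rely on homology to get $\operatorname{rank}\partial_{q-1}=\binom{s-1}{q-1}$; the fact is classical, but I would give a direct argument. Fix a vertex $v_0\in S$.
\begin{itemize}
\item \emph{Lower bound.} The columns indexed by the $q$-subsets containing $v_0$ number $\binom{s-1}{q-1}$. Assign column $J$ the pivot row $J\setminus\{v_0\}$. These pivots are distinct. Every other row in the support of $J$ contains $v_0$, while every pivot row avoids $v_0$. So the submatrix on the pivot rows is the identity, and Lemma~\ref{lem:triangular} (or a direct check) gives independence.
\item \emph{Upper bound.} The augmented complex of the full simplex is acyclic, since it is a cone on $v_0$. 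Equivalently, coning with $v_0$ is a contracting homotopy. Then $\operatorname{rank}\partial_{q-1}+\operatorname{rank}\partial_q=\binom{s}{q}$. By the same pivot argument, $\operatorname{rank}\partial_q\geq\binom{s-1}{q}$, so $\operatorname{rank}\partial_{q-1}\leq\binom{s}{q}-\binom{s-1}{q}=\binom{s-1}{q-1}$.
\end{itemize}
Alternatively, the upper bound follows from the column side: every column not containing $v_0$ equals the sum of the columns $J\setminus\{j\}\cup\{v_0\}$ for $j\in J$. This is the identity $\partial\partial(v_0*J)=0$ written out.

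\textbf{Main obstacle.} The delicate point is bookkeeping, not rank. The rows of the matrix should be exactly the $f_{U,I}$ with $I\subseteq S$. One might worry that some $I$ is never hit. This can only fail when $q-1>s-1$, and that is impossible because $s\geq q$. Similarly, the columns must be exactly the $g(A;J)$ with $A+\mathbf1_J=U$, which Step 1 confirms.
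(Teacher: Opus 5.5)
Your proposal is correct and follows essentially the paper's proof. You use the same residue-mod-$m$ recovery of $(U,I)$ for disjointness and the same identity submatrix on the $q$-subsets containing a root vertex for the lower bound. Your ``alternative'' identity $\partial J=\sum_{j\in J}\partial\bigl((J\setminus\{j\})\cup\{v_0\}\bigr)$ is exactly the paper's upper-bound argument. Your primary upper bound, via acyclicity of the cone and $\rank\partial_{q-1}+\rank\partial_q=\binom{s}{q}$ with $\rank\partial_q\geq\binom{s-1}{q}$, is an equally valid repackaging of the same fact.
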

\begin{proof}
The boundary of a $q$-element set $J$ is the sum of its
$(q-1)$-element subsets, exactly as in \eqref{eq:q-boundary}.
The augmentation convention interprets the empty subset as the
single row when $q=1$.
Choose a root $r\in\Supp(U)$ and retain all $J$ containing $r$.
The row $J\setminus\{r\}$ occurs in no other retained column.
These rows and columns form an identity submatrix of order
$\binom{s-1}{q-1}$.
If $r\notin J$, the identity
\[
 \partial J=\sum_{p\in J}\partial\bigl((J\setminus\{p\})\cup\{r\}\bigr)
\]
expresses its column in their span. Indeed, every face containing
$r$ occurs twice, while the remaining faces are exactly those of $J$.
This also applies to $q=1$. The component rank follows.

To recover $U$ from a row $x^b$, first read $I$ from the positions
whose residues modulo $m$ equal $R$. At these positions,
$U_i=(b_i-R)/m+1$, and elsewhere $U_i=b_i/m$.
Hence distinct $U$ cannot share a row.
\end{proof}

\begin{theorem}\label{thm:simplicial-rank}
Assume \eqref{eq:q-degree}. The rank of all columns \eqref{eq:q-source}
is
\begin{equation}\label{eq:B-tq}
 B_{t,q}(k,d)=\sum_{s=q}^{\min(k,N+q)}
 \binom{s-1}{q-1}\binom{k}{s}\binom{N+q-1}{s-1}.
\end{equation}
For $q\geq2$, they contain a family with acyclic incidence graph of size
\begin{equation}\label{eq:H-tq}
 H_{t,q}(k,d)=\sum_{s=q}^{\min(k,N+q)}
 (s-q+1)\binom{k}{s}\binom{N+q-1}{s-1}.
\end{equation}
Set both counts to zero when the required $N$ is not a nonnegative integer.
\end{theorem}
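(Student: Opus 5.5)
The plan is to decompose the column family by the vector $U=A+\mathbf1_J$ and then apply Proposition~\ref{prop:q-components} to each block. Every column \eqref{eq:q-source} is labelled by a pair $(A,J)$. Conversely, a pair $(U,J)$ with $|U|=N+q$, $J\subseteq\Supp(U)$ and $|J|=q$ recovers $A=U-\mathbf1_J$. So the columns are partitioned into the blocks indexed by $U$. By Proposition~\ref{prop:q-components}, distinct $U$ give disjoint row sets. The column space of the whole family is therefore the direct sum of the block column spaces, and the total rank is $\sum_U\binom{s(U)-1}{q-1}$, where $s(U)=|\Supp(U)|$. Blocks with $s<q$ contain no columns and contribute zero; this is consistent with $\binom{s-1}{q-1}=0$ there. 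The sum is also empty when $N$ is not a nonnegative integer, which matches the stated convention.

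The count is then a stars-and-bars calculation, grouped by $s$. To choose a $U\in\NN^k$ with $|U|=N+q$ and support of size exactly $s$, we first choose the support in $\binom{k}{s}$ ways. We then choose a composition of $N+q$ into $s$ positive parts, in $\binom{N+q-1}{s-1}$ ways. This requires $s\leq N+q$, and of course $s\leq k$. Summing over $q\leq s\leq\min(k,N+q)$ gives \eqref{eq:B-tq}.

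For \eqref{eq:H-tq} with $q\geq2$, we fix $U$ and a root $r\in\Supp(U)$. The naive choice, all $J$ containing $r$, gives an identity submatrix, but its incidence graph need not be a forest when $q\geq3$: two such $J$ can share two common $(q-1)$-faces only if they are equal, yet longer cycles may still occur. So we aim instead for a smaller family of size $s-q+1$ per block. Order $\Supp(U)=\{u_1<\cdots<u_s\}$ and take the ``consecutive'' sets $J_i=\{u_i,\ldots,u_{i+q-1}\}$ for $1\le i\le s-q+1$. Alternatively, fix a $(q-2)$-set $P=\{u_1,\ldots,u_{q-2}\}$ and take $J=P\cup\{u_{q-1},u_j\}$ for $j\geq q$. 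I would use the second, star-shaped choice, since it is cleaner. Every such column contains the row $P\cup\{u_{q-1}\}$ (the face obtained by deleting $u_j$) and the row $P\cup\{u_j\}$, which is private to that column. Every other face is $(P\setminus\{p\})\cup\{u_{q-1},u_j\}$, and this also contains $u_j$, so it is private as well. The incidence graph is thus a star centred at the shared row, with private pendant rows, and hence a forest. Each column has support size $q\geq2$, so Proposition~\ref{lem:Berge-independent} applies.

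Since the rows of distinct blocks are disjoint, the union over $U$ of these forests is again a forest. Its size is $\sum_U(s(U)-q+1)$, and the same stars-and-bars grouping by $s$ gives \eqref{eq:H-tq}. The main point needing care is verifying that the chosen per-block family really has a forest incidence graph. In particular, we must check that no private face is shared by two columns: two distinct columns differ in $u_j$, and each private face contains its own $u_j$, so they cannot coincide. The rest is bookkeeping.
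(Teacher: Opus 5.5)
Your proposal is correct and matches the paper's proof: it likewise partitions the columns by $U$, multiplies the component rank $\binom{s-1}{q-1}$ by the count $\binom{k}{s}\binom{N+q-1}{s-1}$, and for $H_{t,q}$ keeps the columns $B\cup\{p\}$, $p\notin B$, for a fixed $(q-1)$-subset $B$ of the support — exactly your star family with $B=P\cup\{u_{q-1}\}$. Your remark that the root-based identity family can contain cycles for $q\geq3$ (e.g.\ $\{r,a,b\},\{r,b,c\},\{r,a,c\}$) is accurate. It explains why the paper uses a smaller family of $s-q+1$ columns per component.
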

\begin{proof}
There are $\binom{k}{s}$ choices of the support of $U$ and
$\binom{N+q-1}{s-1}$ positive compositions of its total on that support.
Multiply by the component rank in Proposition~\ref{prop:q-components}
and use disjointness of rows to obtain \eqref{eq:B-tq}.
For \eqref{eq:H-tq}, choose one $(q-1)$-subset $B$ of the support
and keep the columns $B\cup\{p\}$ with $p\notin B$.
They have the common row $B$, and each has $q-1$ private row leaves.
Their incidence graph is a tree, with $s-q+1$ column vertices.
Different $U$ components remain disjoint, proving the count.
\end{proof}

For fixed $t$, different $q$ also have disjoint row sets, because a
row in the $q$-family has exactly $q-1$ residues equal to $R$.
Consequently $B_t(k,d)=\sum_{q=1}^k B_{t,q}(k,d)$ is its total rank.
Its relation to the first square is particularly simple.

\begin{proposition}\label{prop:simplicial-collapse}
Let $d\geq1$ and $R=2^t$. Then
\begin{equation}\label{eq:B-collapse}
 B_t(k,d)=
 \begin{cases}
 \rho_k(d/R-1),&R\mid d,\\
 0,&R\nmid d.
 \end{cases}
\end{equation}
In particular,
\begin{equation}\label{eq:hierarchy-collapse}
 \max_{2^t\leq d}B_t(k,d)=\rho_k(d-1)
 \leq\lambda_{\rm lex}(M(k,d)).
\end{equation}
\end{proposition}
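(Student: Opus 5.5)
The plan is to identify the whole $t$-th congruence family with the $2^t$-th Frobenius power of the $\Sq^1$ layer in a smaller degree. After that, the collapse formula is a rank transfer, and the maximum comparison reduces to monotonicity of $\rho_k$ together with an explicit pivot count.

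\textbf{Vanishing case.} By \eqref{eq:q-degree}, the family for a pair $(t,q)$ exists only when $d=R(2N+q+1)$. So if $R\nmid d$, every $B_{t,q}(k,d)$ is zero and $B_t(k,d)=0$.

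\textbf{Frobenius identification.} Now assume $R\mid d$ and put $n=d/R-1$. The source \eqref{eq:q-source} has exponents $R(2a_j+1)$ for $j\in J$ and $R(2a_j)$ otherwise. Hence $g(A;J)=(y^{c})^R$ with $c=2A+\mathbf 1_J$ and $|c|=n$. Conversely, every monomial $y^c$ of degree $n$ with at least one odd exponent arises exactly once in this way: take $J$ to be the set of odd positions, $q=|J|\geq1$ and $N=(n-q)/2$. Monomials with all exponents even are omitted, but they are killed by $\Sq^1$ anyway.

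\textbf{Rank transfer.} Iterating $\Sq^{2r}(f^2)=(\Sq^r f)^2$ gives
\[
 \Sq^R(f^R)=(\Sq^1 f)^R .
\]
One can also check this directly against \eqref{eq:q-boundary}, since the row $f_{U,I}$ is the $R$-th power of the monomial with odd exponents exactly on $I$. Frobenius $f\mapsto f^R$ is an injective linear map. Therefore the union over $q$ of the families has rank $\rank(\Sq^1:\mathcal P_k^n\to\mathcal P_k^{n+1})=\rho_k(n)$. Since different $q$ have disjoint row sets, this rank is the sum $B_t(k,d)$, which proves \eqref{eq:B-collapse}.

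\textbf{Maximum over $t$.} The $t=0$ term equals $\rho_k(d-1)$, so it suffices to show $\rho_k(d/R-1)\leq\rho_k(d-1)$ for $R\geq2$. I expect this monotonicity to be the main obstacle, because $\rho_k$ is not monotone in general: $\rho_1(n)=n\bmod 2$. I would use two injective multiplication maps that commute with $\partial$:
\begin{itemize}
\item multiplication by $x_1^2$, giving $\rho_k(n)\leq\rho_k(n+2)$;
\item for $k\geq2$, multiplication by the cycle $c=x_1^2x_2+x_1x_2^2=\partial(x_1x_2)$, which commutes with $\partial$ because $\partial c=\partial^2(x_1x_2)=0$, giving $\rho_k(n)\leq\rho_k(n+3)$.
\end{itemize}
Each map sends $\im\partial$ in degree $n+1$ injectively into $\im\partial$ in the shifted degree. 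Together they give $\rho_k(n)\leq\rho_k(n+s)$ for every shift $s\geq2$. The gap here is $s=(d-1)-(d/R-1)=d(1-1/R)\geq d/2$. It equals $1$ only when $d=R=2$, and then $\rho_k(0)=0$ makes the inequality trivial. For $k=1$, $R\geq2$ dividing $d$ makes $d-1$ odd, so $\rho_1(d-1)=1$ dominates.

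\textbf{Lexicographic pivots.} Finally, for $\rho_k(d-1)\leq\lambda_{\rm lex}(M(k,d))$, take the $\Sq^1$ columns $x^e$ of degree $d-1$ whose first positive exponent is odd; these span $\ker h$. Their lex-leading term is $x^{e+\eps_{i(e)}}$, since raising the earliest variable is lex-largest, and distinct such $e$ give distinct leading rows. Their number is $\rho_k(d-1)$ by Proposition~\ref{prop:contraction}. Indeed, $h\partial=\operatorname{id}$ on $\ker h$, so $\partial$ is injective there, and $\partial=\partial(h\partial)$ on all of $\mathcal P_k^{d-1}$, so $\im\partial=\partial(\ker h)$.
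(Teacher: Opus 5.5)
Your proof is correct. For the collapse formula it follows the paper: the sources are the $R$-th powers $(y^c)^R$ of degree-$(d/R-1)$ monomials with an odd exponent, $\Sq^R(f^R)=(\Sq^1 f)^R$, and Frobenius is injective.

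You take a different route for the two comparisons.

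\textbf{The bound $B_t(k,d)\le\rho_k(d-1)$ for $t\ge1$.} The paper never compares values of $\rho_k$. Every row of the $t$-family has positive degree and all exponents divisible by $R\ge2$. Such a monomial $x^b$ is the singleton image $\Sq^1(x^{b-\eps_i})$ for any $i$ with $b_i>0$. Hence the whole column space lies in $\im(\Sq^1:\mathcal P_k^{d-1}\to\mathcal P_k^d)$. This one-line containment makes your case analysis unnecessary: no separate treatment of $k=1$ or $d=R=2$, and no need to write the shift as $2a+3b$. Your chain-map argument, multiplying by the $\partial$-cycles $x_1^2$ and $\partial(x_1x_2)$, is valid. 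It proves the stronger, independent fact that $\rho_k(n)\le\rho_k(n+s)$ for every $s\ge2$ when $k\ge2$, which is more than the proposition needs.

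\textbf{The bound $\rho_k(d-1)\le\lambda_{\rm lex}(M(k,d))$.} The paper stays inside the simplicial decomposition. In each component, column $J$ has lexicographic leading row $J\setminus\{\min J\}$. The attainable leading rows are the $(q-1)$-subsets avoiding $r=\min\Supp(U)$. Summing $\binom{s-1}{q-1}$ over components gives $B_0(k,d)=\rho_k(d-1)$ by the first part. Your columns (first positive exponent odd) are exactly the paper's columns with $r\in J$. You count them through the contracting homotopy instead, which is self-contained and independent of the simplicial rank formula. Two small points are left implicit:
\begin{itemize}
\item Deducing $\im\partial=\partial(\ker h)$ from $\partial=\partial h\partial$ uses $h^2=0$. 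This holds because $h$ turns an even first exponent into an odd one.
\item The identity $\partial h+h\partial=\operatorname{id}$ holds only in positive degree. So the case $d=1$ should be disposed of separately by $\rho_k(0)=0$.
\end{itemize}
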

\begin{proof}
If $R\nmid d$, equation \eqref{eq:q-degree} has no solution.
If $R\mid d$, the union of the congruence families consists precisely
of the nonzero columns whose source has all exponents divisible by $R$.
Write such a source as $f^R$, with $\deg f=d/R-1$.
The Frobenius formula gives
$\Sq^R(f^R)=(\Sq^1f)^R$; see
\cite[Proposition~1.3.2]{WalkerWoodI}.
Taking an $R$th power is an injective linear map over $\F$.
Therefore the rank is $\rho_k(d/R-1)$, proving \eqref{eq:B-collapse}.
For $t\geq1$, every row in these columns has even exponents and
positive degree. Such a row monomial is itself a singleton
$\Sq^1$ image, by subtracting one from a positive exponent.
Their span is therefore contained in $\im\Sq^1$ in degree $d$,
so $B_t(k,d)\leq\rho_k(d-1)$. Equality for the maximum follows from
$t=0$.

To compare with leading rows, use the simplicial decomposition at
$t=0$. For fixed $U$, lexicographic comparison of targets makes
$J\setminus\{\min J\}$ the leading row of column $J$: at the first
position where two deletions differ, deletion gives the larger
exponent. Put $r=\min\Supp(U)$. The possible leading rows are
exactly the $(q-1)$-subsets not containing $r$.
Each is realized by adjoining $r$, and none containing $r$ can be a
leading row. There are $\binom{s-1}{q-1}$ of them in the component.
Summing proves that the first layer alone has
$\rho_k(d-1)$ distinct lexicographic leading rows. Adding layers
cannot remove leading rows.
\end{proof}

The singleton, triangular, and simplicial bounds therefore satisfy
\begin{equation}\label{eq:combined-reduced}
 \begin{aligned}
 \max\{D_k^\cup(d),\tau(M),\max_tB_t(k,d)\}&=\tau(M),\\
 \max\{D_k^\cup(d),\lamstd(M),\max_tB_t(k,d)\}&=\lamstd(M).
 \end{aligned}
\end{equation}
The identities also hold at $d=0$ with all rank bounds zero.
The simplicial decomposition still gives explicit columns and relations;
its value as a construction should be distinguished from an additional
numerical improvement over these leading-row bounds.

\section{An explicit quotient by the first square}\label{sec:projection}
The first-layer rank is known exactly, but using only its numerical
value leaves all higher columns in the original coordinates. We now
remove its image by a specified projection. This produces additional
minor bounds and dual functionals without first performing Gaussian
elimination on the hit matrix.

For $d>0$, let $\mathcal T_d(k)$ be the monomials of degree $d$
whose first positive exponent is odd, and put $T_d(k)=\Span\mathcal T_d(k)$.
The homotopy $h$ is the one fixed in \eqref{eq:h-definition}.

\begin{theorem}\label{thm:Sq1-quotient}
Let $k\geq1$ and $d>0$. The map $p_d=h\partial:\mathcal P_k^d\to \mathcal P_k^d$
is a projection with
\begin{equation}\label{eq:split-first-square}
 \im p_d=T_d(k),\qquad
 \ker p_d=\im(\partial:\mathcal P_k^{d-1}\to \mathcal P_k^d),\qquad
 \dim T_d(k)=\rho_k(d).
\end{equation}
If $i=i(e)$ for $|e|=d$, then
\begin{equation}\label{eq:p-explicit}
 p_d(x^e)=
 \begin{cases}
 x^e,&e_i\text{ is odd},\\[2pt]
 \displaystyle\sum_{\substack{j>i\\e_j\text{ odd}}}
                x^{e-\eps_i+\eps_j},&e_i\text{ is even}.
 \end{cases}
\end{equation}
Let $\overline M(k,d)$ have row basis $\mathcal T_d(k)$ and labelled
columns $p_d\Sq^{2^t}(g)$ for $t\geq1$, $2^t\leq d$, and
$\deg g=d-2^t$. Then
\begin{equation}\label{eq:projected-column}
 p_d\Sq^{2^t}(g)=h\Sq^{2^t+1}(g),
\end{equation}
and
\begin{equation}\label{eq:rank-splitting}
 \rank M(k,d)=\rho_k(d-1)+\rank\overline M(k,d).
\end{equation}
\end{theorem}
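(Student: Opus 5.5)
The plan is to derive every assertion from the contraction identity $\partial h+h\partial=\operatorname{id}$ of Proposition~\ref{prop:contraction}, together with $\partial^2=0$. First, $p_d$ is idempotent: $p_d^2=h\partial h\partial=h(\operatorname{id}-h\partial)\partial=h\partial-h h\partial\partial=p_d$, using $\partial^2=0$. This holds in positive degree. In degree $d$ one has $p_d=\operatorname{id}-\partial h$. Hence $\ker p_d\supseteq\im\partial$: if $f=\partial u$, then $p_d f=h\partial\partial u=0$. Conversely, if $p_d f=0$ then $f=\partial h f\in\im\partial$. So $\ker p_d=\im(\partial:\mathcal P_k^{d-1}\to\mathcal P_k^d)$.

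For the image and the explicit formula \eqref{eq:p-explicit}, compute $h\partial(x^e)$ directly. Write $x^e=x_i^{e_i}v$ with $i=i(e)$, exactly as in the proof of Proposition~\ref{prop:contraction}.
\begin{itemize}
\item If $e_i$ is odd, that proof shows $h\partial x^e=x^e$.
\item If $e_i$ is even, then $\partial x^e=x_i^{e_i}\partial v$, and $\partial v=\sum_{j>i,\,e_j\text{ odd}}x^{e-e_i\eps_i+\eps_j}\cdots$. Each term keeps first index $i$ with even exponent $e_i$, so $h$ lowers $e_i$ by one. This gives $\sum_{j>i,\,e_j\text{ odd}}x^{e-\eps_i+\eps_j}$.
\end{itemize}
Every such term has odd first exponent, so $\im p_d\subseteq T_d(k)$. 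Since $p_d$ fixes $\mathcal T_d(k)$, equality holds. Because $p_d$ is a projection, $\dim T_d(k)=N_k(d)-\dim\ker p_d=N_k(d)-\rho_k(d-1)=\rho_k(d)$ by the recurrence \eqref{eq:rho-recurrence}.

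For \eqref{eq:projected-column}, apply the Adem relation $\Sq^1\Sq^{2^t}=\Sq^{2^t+1}$, valid for $t\geq1$ since $\binom{2^t-1}{1}$ is odd. Then $p_d\Sq^{2^t}(g)=h\partial\Sq^{2^t}(g)=h\Sq^{2^t+1}(g)$.

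For the rank splitting \eqref{eq:rank-splitting}, write $\im\Phi_{k,d}=\im\partial+W$, where $W$ is the span of the higher-layer columns. Since $\ker p_d=\im\partial$ and this kernel is contained in $\im\Phi_{k,d}$, we get $\dim\im\Phi_{k,d}=\dim\im\partial+\dim p_d(\im\Phi_{k,d})$. Here $\dim\im\partial=\rho_k(d-1)$, and $p_d(\im\Phi_{k,d})=p_d(W)$ is the column space of $\overline M(k,d)$, expressed in the basis $\mathcal T_d(k)$ of $\im p_d$.

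The main obstacle is the case computation behind \eqref{eq:p-explicit}: one must check that differentiating $v$ never alters the first positive index and never cancels terms. Distinct $j$ give distinct monomials, so no cancellation occurs. The remaining steps are formal.
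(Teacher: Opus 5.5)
Your proof is correct and follows essentially the same route as the paper. You obtain the kernel from $f=\partial h(f)$ when $p_d(f)=0$, and the image and explicit formula from the two cases in the proof of Proposition~\ref{prop:contraction}. You then use the Adem identity $\Sq^1\Sq^{2^t}=\Sq^{2^t+1}$ and rank--nullity for $p_d$ restricted to the hit space, exactly as the paper does. The only difference is cosmetic: you get $p_d^2=p_d$ formally from $h\partial h\partial=h\partial$, whereas the paper reads idempotence off the explicit formula (the image lies in $T_d(k)$ and $p_d$ fixes each monomial of $\mathcal T_d(k)$).
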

\begin{proof}
Formula \eqref{eq:p-explicit} follows directly from the two cases
in the proof of Proposition~\ref{prop:contraction}.
Every nonzero image term has first positive exponent odd, and
$p_d$ is the identity on every monomial of $\mathcal T_d(k)$.
Consequently its image is $T_d(k)$ and $p_d^2=p_d$.
Since $\partial^2=0$, $p_d$ annihilates $\im\partial$.
If $p_d(f)=0$, the contraction identity gives
$f=\partial h(f)+h\partial f=\partial h(f)$.
This proves the kernel assertion and the direct sum
$\mathcal P_k^d=\im\partial\oplus T_d(k)$ as vector spaces.
The recurrence \eqref{eq:rho-recurrence} then gives
$\dim T_d(k)=N_k(d)-\rho_k(d-1)=\rho_k(d)$.

The identity $\Sq^1\Sq^{2r}=\Sq^{2r+1}$ is
\cite[Proposition~1.3.4]{WalkerWoodI}; applying $h$ gives
\eqref{eq:projected-column}.
Let $H=(\Aplus\Pk)_d$. Since $\im\partial\subseteq H$, the
restriction $p_d|_H$ has kernel exactly $\im\partial$.
The image $p_d(H)$ is generated by the higher projected columns,
because the first-layer columns are killed. Rank-nullity for this
restriction yields
$\dim H=\rho_k(d-1)+\dim p_d(H)$, which is
\eqref{eq:rank-splitting}.
\end{proof}

The chosen complement depends on the ordering of the variables;
it is not generally a $\GL(k,\F)$-submodule. For instance, let
$\sigma$ exchange $x_1,x_2$ in degree three. Then
\[
 p_3\sigma(x_1^2x_2)=x_1x_2^2,
 \qquad \sigma p_3(x_1^2x_2)=x_1^2x_2.
\]
Thus the projection is not asserted to be equivariant.

For a monomial source $g$, all terms of $\Sq^{2^t+1}(g)$ have the
same variable support as $g$
\cite[Proposition~1.1.11]{WalkerWoodI}. Therefore their first positive
index is fixed. On the terms retained by $h$, subtracting one from
that fixed exponent is injective. This observation allows
\eqref{eq:projected-column} to be enumerated directly by the Cartan
formula, without computing and reducing first-layer pivot rows.

For $d>0$, let $z(k,d)$ be the number of zero rows of
$\overline M(k,d)$. The following coordinates make their
interpretation explicit.

\begin{proposition}\label{prop:projected-dual}
Let $e$ index a monomial in $\mathcal T_d(k)$ and put $i=i(e)$.
Define
\begin{equation}\label{eq:dual-phi}
 \varphi_e=(x^e)^*+
       \sum_{\substack{j>i\\e_j\in 2\Z_{>0}}}
              (x^{e+\eps_i-\eps_j})^*.
\end{equation}
If row $e$ of $\overline M(k,d)$ is zero, then
$\varphi_e\in K_k(d)$. The functionals obtained from all zero rows
are linearly independent and include all coordinate spike functionals.
In particular,
\begin{equation}\label{eq:projected-dual-bound}
 \dim\Qkd\geq z(k,d)\geq\mathcal S_k(d).
\end{equation}
\end{proposition}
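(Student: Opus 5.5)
The plan is to recognise $\varphi_e$ as the pullback of the coordinate functional $(x^e)^*$ along the projection $p_d$ of Theorem~\ref{thm:Sq1-quotient}, that is, $\varphi_e=p_d^*\bigl((x^e)^*\bigr)$, so that $\varphi_e(f)$ is the coefficient of $x^e$ in $p_d(f)$. Everything else then follows from the kernel and image statements in \eqref{eq:split-first-square} together with Proposition~\ref{prop:dual-certificate}.

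\textbf{Step 1: the identification.} Compute $\varphi_e(x^{e'})$ for each monomial $x^{e'}$ of degree $d$ using \eqref{eq:p-explicit}.
\begin{itemize}
\item If $e'\in\mathcal T_d(k)$, then $p_d(x^{e'})=x^{e'}$, and the coefficient of $x^e$ is $1$ exactly when $e'=e$.
\item Otherwise put $i'=i(e')$. Then $x^e$ occurs in $p_d(x^{e'})$ exactly when $e=e'-\eps_{i'}+\eps_j$ for some $j>i'$ with $e'_j$ odd. Since $e'_{i'}$ is even and positive, $e_{i'}=e'_{i'}-1$ is odd and positive, while the earlier coordinates of $e$ are zero. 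Hence $i'=i(e)=i$, and $e'=e+\eps_i-\eps_j$ with $e_j=e'_j+1$ even and positive.
\item Conversely, every $j>i$ with $e_j\in2\Z_{>0}$ produces such an $e'$. This $e'$ has first index $i$ and even exponent there, and the coefficient is one. The terms for different $j$ are distinct, so no cancellation occurs.
\end{itemize}
This reproduces exactly the sum in \eqref{eq:dual-phi}.

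\textbf{Step 2: annihilation of the hits.} Let $H=(\Aplus\Pk)_d=\im\Phi_{k,d}$, by Proposition~\ref{thm:matrix-rank}. It is spanned by the first-layer columns, which lie in $\im\partial=\ker p_d$, together with the columns $\Sq^{2^t}(g)$ for $t\geq1$. For the latter,
\[
\varphi_e\bigl(\Sq^{2^t}g\bigr)=\text{the coefficient of }x^e\text{ in }p_d\Sq^{2^t}(g),
\]
which is the entry in row $e$ of the corresponding column of $\overline M(k,d)$. This entry vanishes when the row is zero. Thus $\varphi_e$ annihilates $H$, and the proof of Proposition~\ref{prop:dual-certificate} identifies $K_k(d)$ with the annihilator of $H$, so $\varphi_e\in K_k(d)$.

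\textbf{Step 3: linear independence.} The map $p_d$ is surjective onto $T_d(k)$, so its transpose $p_d^*$ is injective on $T_d(k)^*$. The coordinate functionals $(x^e)^*$ for distinct $e\in\mathcal T_d(k)$ are independent there, and hence so are their pullbacks $\varphi_e$.

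\textbf{Step 4: spikes.} Let $x^b$ be a spike of degree $d>0$.
\begin{itemize}
\item Its first positive exponent has the form $2^m-1$ with $m\geq1$, which is odd, so $b\in\mathcal T_d(k)$.
\item No exponent $2^{m_j}-1$ is even and positive, so the sum in \eqref{eq:dual-phi} is empty and $\varphi_b=(x^b)^*$.
\item By Step 1, the entry in row $b$ of the column $p_d\Sq^{2^t}(g)$ equals the coefficient of $x^b$ in $\Sq^{2^t}(g)$. This coefficient vanishes by Lemma~\ref{lem:spike-not-term}, after expanding $g$ into monomials.
\end{itemize}
So every spike row of $\overline M(k,d)$ is zero, and its functional is the coordinate spike functional. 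This gives $z(k,d)\geq\mathcal S_k(d)$.

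\textbf{Step 5: the dimension bound.} Applying Proposition~\ref{prop:dual-certificate} with $\ell=z(k,d)$ and then \eqref{eq:dimension-rank} gives $\dim\Qkd\geq z(k,d)$, completing \eqref{eq:projected-dual-bound}.

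The only genuinely delicate step is the bookkeeping in Step 1. It requires checking that the first positive index of $e'$ necessarily agrees with $i(e)$, and that the contributing $j$ are exactly those with $e_j$ even and positive; the remaining steps are formal.
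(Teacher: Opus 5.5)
Your proposal is correct and follows essentially the same route as the paper: it identifies $\varphi_e$ as the coefficient of $x^e$ in $p_d(f)$, then uses $\ker p_d=\im\partial$ together with the vanishing of row $e$ to place $\varphi_e$ in $K_k(d)$. Independence comes from $\varphi_e$ restricting to the single coordinate of $x^e$ on $T_d(k)$ (your injectivity of $p_d^*$), and the spike rows are handled by Lemma~\ref{lem:spike-not-term} before applying Proposition~\ref{prop:dual-certificate}.
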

\begin{proof}
Let $\eta_e$ be the coefficient of $x^e$ on $T_d(k)$.
For a source monomial with odd first positive exponent,
\eqref{eq:p-explicit} contributes to this coefficient only when the
source is $x^e$ itself. For a source with even first positive exponent,
the corresponding exponents satisfy
$e=a-\eps_i+\eps_j$, where $j>i$ and $a_j$ is odd.
Equivalently $a=e+\eps_i-\eps_j$, with $e_j$ positive even.
Thus \eqref{eq:dual-phi} is precisely $\eta_e\circ p_d$.
It annihilates the first-layer image by \eqref{eq:split-first-square},
and it annihilates all higher columns exactly when row $e$ of
$\overline M$ is zero. It therefore lies in the full Steenrod kernel.

Restricted to $T_d(k)$, $\varphi_e$ is the single coordinate $\eta_e$:
every additional term in \eqref{eq:dual-phi} has even first positive
exponent. Hence these functionals are independent.
A positive-degree spike has all positive exponents odd; its sum in
\eqref{eq:dual-phi} is empty. Lemma~\ref{lem:spike-not-term} shows
that it is a zero row of the projected matrix as well.
Applying Proposition~\ref{prop:dual-certificate} proves the inequalities.
\end{proof}

\subsection{Classification of the projected zero rows}
The explicit formula for the higher projected columns also determines
which coordinates are absent. In particular, their number can be
computed without forming $\overline M(k,d)$.

\begin{theorem}\label{thm:projected-zero-classification}
Let $k\geq1$, $d>0$, and $x^e\in\mathcal T_d(k)$, and put
$i=i(e)$. Row $e$ of $\overline M(k,d)$ is zero if and only if
$x^e$ is a spike, or $e_i=1$, exactly one index $j>i$ has
$e_j=2$, and $e_\ell$ is a spike exponent for every
$\ell\notin\{i,j\}$. Consequently,
\begin{equation}\label{eq:closed-projected-zero-count}
 z(k,d)=\mathcal S_k(d)+
          \sum_{r=0}^{k-2}(r+1)\mathcal S_r(d-3),
\end{equation}
where the sum is empty for $k=1$.
\end{theorem}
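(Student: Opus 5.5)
The plan is to turn the vanishing of row $e$ of $\overline M(k,d)$ into a condition on a single monomial of degree $d+1$. Then decide that condition coordinatewise using Lucas' criterion, and finally count.

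\emph{Step 1: reduction to one monomial.} By \eqref{eq:projected-column}, row $e$ is zero iff $x^e$ does not occur in $h\Sq^{2^t+1}(x^c)$ for any $t\geq1$ and any monomial $x^c$ of degree $d-2^t$. By \eqref{eq:h-definition}, $h(x^a)=x^e$ forces $a=e+\eps_{i(a)}$ with $a_{i(a)}$ even. If $i(a)<i$, then $a_{i(a)}=1$ is odd and $h(x^a)=0$. Hence $a=y:=e+\eps_i$, and $y_i=e_i+1$ is even, since $x^e\in\mathcal T_d(k)$ makes $e_i$ odd. So row $e$ is zero iff $x^y$ lies in the support of no $\Sq^{2^t+1}(x^c)$ with $t\geq1$. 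By \eqref{eq:cartan}, the latter holds iff there is no $b\in\NN^k$ with $|b|=2^t+1$, $t\geq1$, $b_\ell\preceq y_\ell-b_\ell$ for all $\ell$. No cancellation can occur, because distinct tuples give distinct monomials.

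\emph{Step 2: one-variable analysis.} For each value $y_\ell$, I will determine the set $B(y_\ell)=\{\beta:\beta\preceq y_\ell-\beta\}$. Writing $c=y_\ell-\beta=\beta+u$ with $u$ digit-disjoint from $\beta$ gives $y_\ell=2\beta+u$. From this:
\begin{itemize}
\item[$\bullet$] $\beta$ odd forces $u$ even, hence $y_\ell$ even. So odd contributions come only from even coordinates.
\item[$\bullet$] $y_\ell=2^n-1$ gives $B=\{0\}$; this is Lemma~\ref{lem:spike-not-term} in one variable.
\item[$\bullet$] $y_\ell=2$ gives $B=\{0,1\}$.
\item[$\bullet$] $y_\ell=1$ gives $B=\{0\}$.
\item[$\bullet$] Any $y_\ell$ that is not a spike exponent and is not $2$ admits some $\beta$ beyond these, as in the proof of Theorem~\ref{thm:exact-zero-rows}. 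For instance, $y_\ell$ even with $y_\ell\geq4$ always has $1$ and a nonzero even element available.
\end{itemize}

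\emph{Step 3: the classification.} I need to decide when $2^t+1$ with $t\geq1$ is a sum of elements $\beta_\ell\in B(y_\ell)$. For the \emph{if} direction, first suppose $e$ is a spike. Then only $y_i=2^m$ is non-spike, and $B(2^m)=\{\beta:2\beta+u=2^m\}$. The odd $\beta$ there is $\beta=1$ with $u=2^m-2$; every other $\beta$ in this set is even. So the achievable sums are $0$, $1$, or even numbers, and never $2^t+1$ with $t\ge1$. In the exceptional case, $y_i=2$, $y_j=2$, and all other coordinates are spikes. The available totals are then at most $2$, and $3$ is out of reach.

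For the \emph{only if} direction, I will argue contrapositively: if $e$ is neither a spike nor exceptional, I exhibit an allocation with total $3$ or $5$ (or $2^t+1$). There are three sub-cases:
\begin{itemize}
\item[$\bullet$] some coordinate $\ell\neq i$ has a non-spike exponent other than $2$;
\item[$\bullet$] $e_i\geq3$ with $e_i$ not a spike exponent, or two or more coordinates $j>i$ have $e_j=2$;
\item[$\bullet$] $e_i>1$ is a spike exponent and some $e_j=2$.
\end{itemize}
In each case I will combine a $\beta=1$ from one even coordinate with an even power of two from another coordinate, or from the same one, using Lemma~\ref{lem:binary-selection} for the digit bookkeeping. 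This case analysis is the main obstacle; I would first tabulate $B(y)$ for small $y$ to make sure no exceptional family is missed.

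\emph{Step 4: counting.} Spikes contribute $\mathcal S_k(d)$. For the exceptional rows, fix $i$. The coordinates before $i$ are zero, and $j$ is chosen among the $r+1$ later indices, where $r=k-i-1$. The remaining $r$ coordinates form a spike of degree $d-3$. Summing over $i$, so that $r$ runs from $0$ to $k-2$, gives \eqref{eq:closed-projected-zero-count}.
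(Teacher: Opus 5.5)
Your Step~1 reduction to the single monomial $x^y$, $y=e+\eps_i$, your check that spike rows and the exceptional rows are zero, and your count in Step~4 are correct and match the paper. The gap is the \emph{only if} direction, which is the substance of the classification. You give a complete case split but no witnesses. Worse, the one concrete fact you offer for it is false: an even $y_\ell\geq4$ need not have a nonzero even element in $B(y_\ell)$. For example $B(6)=\{0,1,3\}$. In general $B(2^v-2)$ contains no nonzero even $\beta$: if $s=\nu_2(\beta)\geq1$, then $y_\ell-2\beta$ still has digit one at position $s$, and so does $\beta$.

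The exponents $2^v-2$ with $v\geq3$ are exactly the non-spike exponents other than $2$ whose only internal zero digit is the units digit. So the borrowing trick of Theorem~\ref{thm:exact-zero-rows} gives no $2^t$ with $t\geq1$ for them. If also $e_i=1$, then $B(y_i)=B(2)=\{0,1\}$ too. Pairing a $1$ with an even power of two from any coordinate then finds nothing for rows such as $e=(1,6)$. The witness that works is the single element $3\in B(2^v-2)$, i.e.\ $b=3\eps_\ell$, valid because $2^v-5\equiv3\pmod4$; your tabulation never supplies it. Likewise, $e_i=1$ with two later exponents equal to $2$ needs $b=\eps_i+\eps_j+\eps_\ell$, three ones, which is not of your stated form.

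To close the gap, organize necessity as a chain of explicit witnesses, checking $b_\ell\preceq y_\ell-b_\ell$ directly; Lemma~\ref{lem:binary-selection} is not needed.
\begin{itemize}
\item[$\bullet$] If the odd exponent $e_i$ has a zero digit at a position $t\geq1$ below its top digit, $b=(2^t+1)\eps_i$ works. Hence $e_i=2^u-1$.
\item[$\bullet$] If a later $e_j$ has such a digit, $b=\eps_i+2^t\eps_j$ works. Hence every later positive exponent is $2^v-1$ or $2^v-2$ with $v\geq2$.
\item[$\bullet$] If some $e_j=2^v-2$ and $u\geq2$, $b=2\eps_i+\eps_j$ works, which forces $e_i=1$.
\item[$\bullet$] Then $v\geq3$ is excluded by $b=3\eps_j$, and two later exponents equal to $2$ are excluded by $b=\eps_i+\eps_j+\eps_\ell$.
\end{itemize}
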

\begin{proof}
Set $y=e+\eps_i$. The only monomial taken by $h$ to $x^e$
is $x^y$: decreasing the first positive even exponent preserves
its position. By \eqref{eq:projected-column}, row $e$ is nonzero
exactly when $x^y$ occurs in $\Sq^{R+1}(x^a)$ for some
$R=2^t\geq2$ and $|a|=d-R$. By \eqref{eq:cartan}, this is
equivalent to the existence of $a,b\in\NN^k$ such that
\begin{equation}\label{eq:projected-row-witness}
 a+b=y,\qquad |b|=R+1,\qquad b_\ell\preceq a_\ell
 \quad(1\leq\ell\leq k).
\end{equation}
We first derive necessary conditions for the absence of such a pair.

Suppose the odd exponent $e_i$ is not a spike exponent. Some binary
position $t\geq1$ below its highest nonzero digit has digit zero.
For $R=2^t$, take $a=e-R\eps_i$ and $b=(R+1)\eps_i$.
Subtracting $R$ borrows from a higher position and makes the $t$th
digit of $a_i$ equal to one, while its units digit remains one.
Thus $R+1\preceq a_i$, and \eqref{eq:projected-row-witness} holds.
A zero row therefore has $e_i=2^u-1$ for some $u\geq1$.

If a later exponent $e_j$ has a zero digit in some position
$t\geq1$ below its highest nonzero digit, take
$a=e-2^t\eps_j$ and $b=\eps_i+2^t\eps_j$.
The oddness of $a_i=e_i$ and the same borrowing calculation show
that this is another pair in \eqref{eq:projected-row-witness}.
Consequently each positive exponent after position $i$ is either
$2^v-1$ or $2^v-2$; a positive exponent of the second type has
$v\geq2$.

If every later exponent is a spike exponent, then $x^e$ is a spike.
Otherwise choose $j>i$ with $e_j=2^v-2$. If $u\geq2$, take
$b=2\eps_i+\eps_j$ and $a=y-b$. The $i$th source exponent is
$2^u-2$, which has binary digit one at position one, and the $j$th
source exponent is the odd integer $2^v-3$. Thus this pair has
$|b|=3$ and satisfies \eqref{eq:projected-row-witness} for $R=2$.
For a non-spike zero row one must therefore have $u=1$, hence $e_i=1$.
If $v\geq3$, the alternative choice $b=3\eps_j$, $a=y-b$
also gives a witness: $a_j=2^v-5\equiv3\pmod4$.
Every later positive even exponent must therefore equal two.
If two of them occur, at $j,\ell>i$, choose
$b=\eps_i+\eps_j+\eps_\ell$, $a=y-b$. Its three indicated
source exponents are all one, so it again gives a witness with $R=2$.
There is consequently exactly one later exponent equal to two.
All sources constructed above have nonnegative exponents; the
coordinatewise inclusions $b_\ell\preceq a_\ell$ imply
$|a|\geq|b|=R+1$. In particular the relevant generator is active
and no instability exception has been used.

Conversely, spike rows are zero by
Proposition~\ref{prop:projected-dual}. In the other stated case,
$y_i=y_j=2$ and every remaining $y_\ell$ is a spike exponent.
A positive Cartan increment into a spike exponent is impossible by
Lemma~\ref{lem:spike-not-term} in one variable. An increment into
exponent two can only be zero or one: increment two would have
source exponent zero and coefficient $\binom02=0$.
Every surviving allocation into $y$ therefore has total at most two,
whereas \eqref{eq:projected-row-witness} requires $R+1\geq3$.
The row is zero, proving the classification.

For a fixed first positive index $i$, there are $k-i$ possible
positions for the exponent two. The $k-i-1$ other later positions
carry arbitrary spike exponents with sum $d-3$; all earlier
positions are zero. Thus the non-spike zero rows number
$\sum_{i=1}^{k-1}(k-i)\mathcal S_{k-i-1}(d-3)$.
They are disjoint from the spike rows. Replacing $k-i-1$ by $r$
gives \eqref{eq:closed-projected-zero-count}, including the cases
$d<3$ and $k=1$ under the stated conventions.
\end{proof}

\begin{corollary}\label{cor:projected-spike-module}
Let $k\geq1$ and $d>0$. The span of the functionals
$\varphi_e$ indexed by the zero rows of $\overline M(k,d)$
is a $z(k,d)$-dimensional subspace of $J_k(d)$.
\end{corollary}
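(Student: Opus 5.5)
The plan is to verify separately that each $\varphi_e$ lies in $J_k(d)$ and that the span has dimension $z(k,d)$. The dimension statement comes straight from Proposition~\ref{prop:projected-dual}: the functionals $\varphi_e$ for the zero rows are linearly independent and lie in $K_k(d)$, and there are exactly $z(k,d)$ of them. So the substance of the corollary is the containment in the dual-spike module. For that I would use the classification in Theorem~\ref{thm:projected-zero-classification} and treat the two kinds of zero row separately.

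\textbf{Spike rows.} If $x^e$ is a spike, every positive exponent is odd, so the sum in \eqref{eq:dual-phi} is empty. Then $\varphi_e=(x^e)^*$ is one of the generators of $J_k(d)$ by definition.

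\textbf{Non-spike rows.} Here $e_i=1$, exactly one later index $j$ has $e_j=2$, and every other exponent is a spike exponent. The only positive even exponent after position $i$ is $e_j$, so
\[
 \varphi_e=(x^e)^*+(x^{e'})^*,\qquad e'=e+\eps_i-\eps_j,
\]
that is, $e'$ has $(e'_i,e'_j)=(2,1)$ and otherwise agrees with $e$. Let $s$ and $s'$ agree with $e$ outside $\{i,j\}$ and have $(s_i,s_j)=(3,0)$ and $(s'_i,s'_j)=(0,3)$, respectively. Both $x^s$ and $x^{s'}$ are spikes of degree $d$, because $3=2^2-1$ and the remaining exponents are spike exponents.

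\textbf{Linear substitution.} Next I would apply the transpose of the substitution $\sigma\colon x_j\mapsto x_j+x_i$, fixing the other variables, to $(x^s)^*$. The transposed functional sends $x^c$ to the coefficient of $x^s$ in $\sigma(x^c)$. That coefficient vanishes unless $c$ agrees with $s$ outside $\{i,j\}$ and $c_i+c_j=3$. In that case we need the coefficient of $x_i^3x_j^0$ in $x_i^{c_i}(x_j+x_i)^{c_j}$, which is $1$. Hence
\[
 \sigma^*(x^s)^*=(x^s)^*+(x^{e'})^*+(x^e)^*+(x^{s'})^*,
\]
and so
\[
 \varphi_e=\sigma^*(x^s)^*+(x^s)^*+(x^{s'})^*\in J_k(d).
\]

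\textbf{Main obstacle.} The delicate step is the bookkeeping of the module action: whether $J_k(d)$ is formed with the transpose of substitution or with its inverse transpose, and whether $x_j\mapsto x_j+x_i$ or $x_i\mapsto x_i+x_j$ is the right choice. Only one of these orientations produces all four terms with $c_i+c_j=3$; the other gives only $(x^s)^*$ back. Since $J_k(d)$ is a $\GL(k,\F)$-submodule, it is closed under both choices. So I would check the coefficient computation for both substitutions and use whichever yields the four-term sum; the conclusion is then independent of the convention.
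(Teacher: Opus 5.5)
Your proposal is correct and follows the paper's proof. Independence and the count $z(k,d)$ come from Proposition~\ref{prop:projected-dual}, and spike rows give coordinate dual spikes. For non-spike rows you write $\varphi_e$ as two dual spikes plus the image of $(x^s)^*$ under an elementary transvection; this is exactly the paper's identity $\varphi_e=\bigl((v_i+v_j)^{(3)}+v_i^{(3)}+v_j^{(3)}\bigr)w$, with $w$ the divided-power monomial in the remaining variables, checked coefficient by coefficient. Your orientation worry is harmless: the transposes of all linear substitutions form a group, so $J_k(d)$ is closed under $\sigma^*$ whichever convention defines the action.
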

\begin{proof}
Independence was proved in Proposition~\ref{prop:projected-dual}.
For a spike row, $\varphi_e$ is a coordinate dual spike. For a
non-spike zero row, Theorem~\ref{thm:projected-zero-classification}
gives indices $i<j$ with $e_i=1,e_j=2$ and spike exponents elsewhere.
Let $w=\prod_{\ell\notin\{i,j\}}v_\ell^{(e_\ell)}$.
Then
\[
 \varphi_e=\bigl(v_i^{(1)}v_j^{(2)}+v_i^{(2)}v_j^{(1)}\bigr)w
 =\bigl((v_i+v_j)^{(3)}+v_i^{(3)}+v_j^{(3)}\bigr)w.
\]
The divided-power identity is the one used in
\cite[Example~9.4.6]{WalkerWoodI}. The last two terms are dual
spikes. The first is the image of the dual spike $v_i^{(3)}w$
under the invertible substitution $v_i\mapsto v_i+v_j$, which
fixes the variables occurring in $w$. Hence all three terms belong
to $J_k(d)$, proving the assertion.
\end{proof}

Thus the projected coordinates may enlarge the span of coordinate
spikes, but the resulting functionals remain in the classical
dual-spike module. In particular this zero-row construction does not
detect a nonzero class in $K_k(d)/J_k(d)$.

\section{Two-sided rank and cohit bounds}\label{sec:main}
We combine the unprojected and projected minors with the relation and
dual bounds. Taking maxima on the lower side and minima on the upper
side avoids any unproved independence between different certificates.

For $d>0$, let $a(k,d)$ and $b(k,d)$ be the numbers of nonzero rows
and nonzero labelled columns of $\overline M(k,d)$, respectively.
The row count is determined by the closed formula
\[
 a(k,d)=\rho_k(d)-z(k,d),
\]
with $z(k,d)$ as in \eqref{eq:closed-projected-zero-count}.
Let $L\subseteq K_k(d)$ have dimension $\ell_k(d)$, and let
$\mathscr R$ be a finite relation set as in
\eqref{eq:generator-relation}. Define
\begin{align}
 \mathcal L_{\rm opt}(k,d)
   &=\max\{\tau(M(k,d)),\rho_k(d-1)+\tau(\overline M(k,d))\},
       \label{eq:lower-opt}\\
 \mathcal L_{\rm eval}(k,d)
   &=\max\{\lamstd(M(k,d)),
                  \rho_k(d-1)+\lamstd(\overline M(k,d))\},
       \label{eq:lower-eval}\\
 \mathcal U(k,d)
   &=\min\{N_k(d)-\ell_k(d),\,
           \mathcal W_k^{(1)}(d)-\delta_{\mathscr R}^{(1)}(k,d),
           \nonumber\\[-2pt]
   &\hspace{8em}\rho_k(d-1)+\min(a(k,d),b(k,d))\}.
       \label{eq:upper-combined}
\end{align}
For $d=0$, define these rank bounds to be zero.

\begin{theorem}\label{thm:main-rank}
Let $k\geq1$ and $d\geq0$. For $d>0$, let
$L\subseteq K_k(d)$ and $\mathscr R$ satisfy the hypotheses just
specified. Then
\begin{equation}\label{eq:main-rank}
 \mathcal L_{\rm eval}(k,d)\leq\mathcal L_{\rm opt}(k,d)
 \leq\rank M(k,d)\leq\mathcal U(k,d).
\end{equation}
Consequently,
\begin{equation}\label{eq:main-dimension}
 N_k(d)-\mathcal U(k,d)\leq\dim\Qkd
 \leq N_k(d)-\mathcal L_{\rm opt}(k,d)
 \leq N_k(d)-\mathcal L_{\rm eval}(k,d).
\end{equation}
In degree zero, $\dim(Q\mathcal P_k)_0=1$.
\end{theorem}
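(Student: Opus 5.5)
The theorem assembles earlier results, so the plan is to verify each inequality in \eqref{eq:main-rank} separately. Then \eqref{eq:main-dimension} follows by subtracting from $N_k(d)$, using Proposition~\ref{thm:matrix-rank}. The degree-zero case is handled first. $\mathcal P_k^0=\F$ is spanned by $1$, and $V(k,0)=0$ because no $2^t\leq0$. Hence $M(k,0)$ has no columns and rank zero, all bounds are zero by convention, and $\dim(Q\mathcal P_k)_0=N_k(0)=1$. From now on assume $d>0$.

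\textbf{Lower bounds.} The comparison $\mathcal L_{\rm eval}\leq\mathcal L_{\rm opt}$ is termwise. Since $\lamstd$ is a maximum of $\lambda_\prec$ over four specific total orders, Theorem~\ref{thm:optimal-order} gives $\lamstd(A)\leq\max_\prec\lambda_\prec(A)=\tau(A)$. Apply this to both $A=M(k,d)$ and $A=\overline M(k,d)$; the latter is allowed because Theorem~\ref{thm:optimal-order} holds for an arbitrary finite matrix over $\F$. For $\mathcal L_{\rm opt}\leq\rank M$, the first entry is covered by $\tau(M)\leq\rank M$ from the same theorem. For the second entry, Theorem~\ref{thm:optimal-order} gives $\tau(\overline M)\leq\rank\overline M$. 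Adding $\rho_k(d-1)$ and invoking the splitting \eqref{eq:rank-splitting} of Theorem~\ref{thm:Sq1-quotient} gives $\rho_k(d-1)+\tau(\overline M)\leq\rank M$.

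\textbf{Upper bounds.} There are three entries in $\mathcal U$. The first, $N_k(d)-\ell_k(d)$, is Proposition~\ref{prop:dual-certificate} applied to $L$. The second is exactly Theorem~\ref{thm:Adem-upper}. For the third, use \eqref{eq:rank-splitting} again. The rank of $\overline M$ is at most its number of nonzero rows $a(k,d)$, since deleting zero rows does not change rank. It is also at most its number of nonzero labelled columns $b(k,d)$. Hence $\rank M\leq\rho_k(d-1)+\min(a,b)$. The closed expression $a(k,d)=\rho_k(d)-z(k,d)$ needs two ingredients: the row basis $\mathcal T_d(k)$ has $\dim T_d(k)=\rho_k(d)$ elements by \eqref{eq:split-first-square}, and $z(k,d)$ counts the zero rows. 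The formula for $z(k,d)$ is given by Theorem~\ref{thm:projected-zero-classification}, but it is not needed for the inequality itself. Taking the minimum of the three entries gives $\rank M\leq\mathcal U$.

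\textbf{Expected obstacle.} There is essentially no real obstacle; every step is a citation. The only point needing care is that the second entry of $\mathcal L_{\rm opt}$ mixes two certificates. That is legitimate only because \eqref{eq:rank-splitting} is an exact identity, not merely an inequality. Independent columns of $\overline M$ therefore lift to independent hit classes modulo $\im\partial$, which has dimension exactly $\rho_k(d-1)$. I would state this explicitly, together with the remark that the maxima and minima require no compatibility between certificates.
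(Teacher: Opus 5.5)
Your proposal is correct and follows the paper's own proof essentially step by step. For the lower bounds you apply Theorem~\ref{thm:optimal-order} to both $M(k,d)$ and $\overline M(k,d)$ and use the exact splitting \eqref{eq:rank-splitting}; for the three upper entries you use Proposition~\ref{prop:dual-certificate}, Theorem~\ref{thm:Adem-upper} and $\rank\overline M\leq\min(a(k,d),b(k,d))$; and you subtract from $N_k(d)$ via \eqref{eq:dimension-rank}. Your extra remarks only spell out steps the paper leaves implicit: $\lamstd\leq\tau$ because the four orders in $\mathcal O$ are among all total orders, and the $d=0$ case is checked explicitly.
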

\begin{proof}
For $d>0$, apply Theorem~\ref{thm:optimal-order} to $M$ and
$\overline M$. Equation~\eqref{eq:rank-splitting} permits the
addition of $\rho_k(d-1)$ to any lower bound for the projected rank.
This gives both lower bounds in \eqref{eq:main-rank}.
The dual bound is Proposition~\ref{prop:dual-certificate}; the
relation bound is Theorem~\ref{thm:Adem-upper}. Finally,
$\rank\overline M\leq\min(a(k,d),b(k,d))$, which together with
\eqref{eq:rank-splitting} gives the third entry of \eqref{eq:upper-combined}.
Each bound is valid separately, so the minimum is valid.
Subtract \eqref{eq:main-rank} from $N_k(d)$ and use
\eqref{eq:dimension-rank}. When $d=0$, the matrix has one row and
no columns, and the assertions follow directly.
\end{proof}

Taking $L$ to be the coordinate spike space and $\mathscr R$ empty
requires no external basis information. The projected-row bound can
also be written $\dim\Qkd\geq z(k,d)$, because
$N_k(d)-\rho_k(d-1)=\dim T_d(k)$ and
$\dim T_d(k)-a(k,d)=z(k,d)$.
Independent cohit classes or explicit dual kernels from the literature
may strengthen the chosen $\ell_k(d)$, and additional evaluated
relations may strengthen $\delta_{\mathscr R}^{(1)}$.
The examples below first evaluate the bounds from the selected
support families, and then incorporate classical spike-insertion
information separately.

\section{Numerical bounds and comparison with known dimensions}\label{sec:examples}
We first illustrate the effect of the row order and the first-square
projection in small degrees. We then evaluate the inequalities for
$(k,d)=(5,29)$ and compare the different choices of
row order and lower-bound construction. The entries in the tables below are obtained by enumerating
the surviving allocations in \eqref{eq:cartan} and
\eqref{eq:projected-column} and counting the corresponding support
rows in the four prescribed orders.

\subsection{Order dependence in two variables}
In degree $5$, identify a row with the exponent $a$ of $x_1$.
The nonzero supports are
\[
 \{1,2\},\ \{3,4\},\ \{0\},\ \{1\},\ \{4\},\ \{5\}.
\]
Thus the exact singleton-target count is four, whereas the selected
count $D_2^{\cup}(5)$ is two. Exponent lexicographic and reverse
lexicographic orders give five leading rows. Both specified weight
orders give six, because the weights $(1,2)$ at rows $2,3$ are larger
than $(1,0,1)$ at rows $1,4$ in both conventions.
Therefore $\rank M(2,5)=\tau(M(2,5))=6$ and
$\dim(Q\mathcal P_2)_5=0$.

In degree $7$, the five supports are the edges of the path
\[
 1-2-4-3-5-6.
\]
The four predetermined orders each give four leading rows.
The order $0\prec1\prec2\prec4\prec3\prec5\prec6\prec7$ gives
five. Thus $\tau(M(2,7))=\rank M(2,7)=5$ and
$\dim(Q\mathcal P_2)_7=3$.
These examples show that the four-order test need not find the optimal
triangular minor; they make no assertion that some unspecified
universal order is optimal.

\subsection{Three variables in degree seven}\label{subsec:k3d7}
Here $N_3(7)=36$, $\mathcal S_3(7)=6$,
$\mathcal W_3(7)=30$, and $\rho_3(6)=15$.
The nonzero first-layer count is $18$, so
$\delta_1(3,7)=3$ and $\mathcal W_3^{(1)}(7)=27$.
The four original leading-row counts are $20,20,23,20$.
After the first-square projection, the $\Sq^2$ layer alone has eleven
distinct lexicographic leading rows, as the following complete
selected list shows. An exponent tuple in the last column denotes a
monomial of degree seven.
\begin{center}
\begin{tabular}{ccl}
\toprule
Source for $\Sq^2$&Leading row&Support of $p_7\Sq^2$\\\midrule
$(0,2,3)$&$(0,3,4)$&$(0,1,6)+(0,3,4)$\\
$(0,3,2)$&$(0,5,2)$&$(0,3,4)+(0,5,2)$\\
$(1,1,3)$&$(1,2,4)$&$(1,1,5)+(1,2,4)$\\
$(1,2,2)$&$(1,4,2)$&$(1,2,4)+(1,4,2)$\\
$(1,3,1)$&$(1,5,1)$&$(1,4,2)+(1,5,1)$\\
$(2,0,3)$&$(3,0,4)$&$(1,0,6)+(3,0,4)$\\
$(2,1,2)$&$(3,2,2)$&$(1,2,4)+(3,2,2)$\\
$(2,3,0)$&$(3,4,0)$&$(1,6,0)+(3,4,0)$\\
$(3,0,2)$&$(5,0,2)$&$(3,0,4)+(5,0,2)$\\
$(3,1,1)$&$(5,1,1)$&$(3,2,2)+(5,1,1)$\\
$(3,2,0)$&$(5,2,0)$&$(3,4,0)+(5,2,0)$\\\bottomrule
\end{tabular}
\end{center}
The table follows by expanding $h\Sq^3$ on the stated sources.
For example,
$h\Sq^3(x_2^2x_3^3)=x_2x_3^6+x_2^3x_3^4$.
Thus \eqref{eq:rank-splitting} gives the lower rank bound $15+11=26$.

For the matching upper bound, use \eqref{eq:Adem22}.
Its $t=1$ component on a degree-three source is $\Sq^2(f)$.
For $f=x_i^3$ or $x_i^2x_j$, this component is respectively $x_i^5$
or $x_i^4x_j$, a zero-column label in the $\Sq^2$ layer.
The whole relation vector is then in $F_1$: after removing its
zero-column component, the remaining first-layer vector is killed by
$\partial$ and hence is in $\im\Psi_{\Theta^{(1)},7}$.
The remaining degree-three source is $x_1x_2x_3$.
Its $t=1$ component is
\[
 x_1^2x_2^2x_3+x_1^2x_2x_3^2+x_1x_2^2x_3^2.
\]
All three labels have nonzero $\Sq^2$ columns, so this component
cannot lie in $E_0$; $F_1$ has no other vectors in the second-layer
summand. This gives exactly one additional relation modulo $F_1$.
Thus $\delta_{\mathscr R}^{(1)}(3,7)=1$ and
Theorem~\ref{thm:Adem-upper} gives $\rank M(3,7)\leq26$.
Together with the displayed minor this proves
\[
 \rank M(3,7)=26,\qquad \dim(Q\mathcal P_3)_7=10
\]
without elimination of the full hit matrix.

\subsection{Five variables in degree twenty-nine}\label{subsec:k5d29}
The closed formulas give
\[
 \begin{gathered}
 N_5(29)=40920,\quad \mathcal S_5(29)=95,\quad
 \mathcal W_5(29)=99961,\quad \mathcal W_5^{(1)}(29)=86220,\\
 D_5^\cup(29)=1050,\quad
 B_{0,2}=H_{0,2}=11424,\quad
 B_{0,4}=7735,\quad H_{0,4}=5005.
 \end{gathered}
\]
The simplicial sum is $B_0(5,29)=19159=\rho_5(28)$.
The singleton targets of all actual columns number $1650$, which
again exceeds the count of the selected congruence family.
The complete original-layer enumeration is
\begin{center}
\begin{tabular}{r|r|r|r|r}
\toprule
$t$&Sources&Nonzero columns&New lex leading rows&Cumulative\\\midrule
0&35960&32900&19159&19159\\
1&31465&31465&7656&26815\\
2&23751&23576&2048&28863\\
3&12650&12020&414&29277\\
4&2380&0&0&29277\\\bottomrule
\end{tabular}
\end{center}
For all layers together, the four-order counts are
\begin{center}
\begin{tabular}{c|r|r}
\toprule
Order&$\lambda_\prec(M(5,29))$&$\lambda_\prec(\overline M(5,29))$\\\midrule
lexicographic&29277&16136\\
degree-reverse-lexicographic&31524&17028\\
left weight&39086&20096\\
right weight&30437&15212\\\bottomrule
\end{tabular}
\end{center}
All projected counts concern only $t\geq1$.
There are $21666$ nonzero rows of $\overline M$ in the
$21761$-dimensional space $T_{29}(5)$. Its $95$ zero rows give only
the coordinate spike functionals in this example, as also follows
from \eqref{eq:closed-projected-zero-count}:
$\mathcal S_r(26)=0$ for $0\leq r\leq3$.
The improved lower hit rank is
$19159+20096=39255$, and the spike upper rank is $40920-95=40825$.
Therefore the support certificates prove
\begin{equation}\label{eq:k5d29-interval}
 39255\leq\rank M(5,29)\leq40825,
 \qquad 95\leq\dim(Q\mathcal P_5)_{29}\leq1665.
\end{equation}
The best unprojected order in the table is left weight, giving
$40920-39086=1834$. The projection improves this upper bound by
$1834-1665=169$. The larger change from the exponent-lexicographic
bound $11643$ also includes the effect of changing the row order.

The lower endpoint $95$ comes only from the projected zero rows.
Mothebe--Uys \cite[Theorem~6 and Table~1]{MothebeUys2015} give the
stronger lower bound $450$ by inserting spike exponents into
admissible monomials. The same insertion principle, evaluated with
the following specified four-variable bases, gives a further
improvement. Let $\mathcal B_4(n)$ be the admissible monomial basis
in degree $n$ for left weight order with exponent-lexicographic
tie-breaking. For $m\in\{0,1,3,7,15\}$, let $\mathcal I_m$
be the set obtained by inserting the exponent $m$ at each of the
five positions of every exponent tuple in $\mathcal B_4(29-m)$.
The four-variable dimensions are the classical ones of
\cite{Sum2015,WalkerWoodII}; the representatives used below are
specified in the computational data.

By \cite[Theorem~4]{MothebeUys2015}, every monomial of
$\mathcal I_m$ is admissible in five variables. Indeed, the
hypothesis $\alpha(29-m+4)\leq4$ holds for all five values of $m$.
Distinct admissible monomials are linearly independent for this fixed
order. The required calculation is therefore a count of the union,
not a sum of the five separate cardinalities. In increasing order
of $m$, the counts are
\begin{center}
\begin{tabular}{r|r|r|r|r}
\toprule
$m$&$29-m$&$|\mathcal B_4(29-m)|$&$|\mathcal I_m|$
  &$|\mathcal I_m\setminus\bigcup_{m'<m}\mathcal I_{m'}|$\\\midrule
0&29&45&195&195\\
1&28&21&105&105\\
3&26&64&175&75\\
7&22&116&445&130\\
15&14&50&250&45\\\bottomrule
\end{tabular}
\end{center}
Each entry is obtained by inserting $m$ into the four-component
tuples and deleting repeated five-component tuples. Thus
\begin{equation}\label{eq:k5d29-strengthened}
 550\leq\dim(Q\mathcal P_5)_{29}\leq1665.
\end{equation}
This lower bound uses the classical admissible-monomial insertion
theorem in addition to the support calculations; it is not a count
of zero rows of $\overline M(5,29)$.

For comparison, the dimension formula in
\cite[Theorem~1.1]{Phuc2020}, specialized to $t=3$ in the family
$3(2^t-1)+2^t$, together with \eqref{eq:dimension-rank}, gives
\begin{equation}\label{eq:exact29}
 \rank M(5,29)=40275,\qquad \dim(Q\mathcal P_5)_{29}=645.
\end{equation}

\medskip

\subsection*{Funding}
\DD\d{\u a}ng V\~o Ph\'uc was funded by the Post-Doctoral Scholarship
Programme of Vingroup Innovation Foundation (VINIF), Institute of Big
Data, code: VINIF.2024.STS.38.

\subsection*{Conflict of interest}
The authors declare no conflict of interest.

\end{document}